\documentclass[12pt,  reqno]{amsart}
\usepackage{a4wide}
\usepackage{dsfont}
\usepackage{amsmath, amsthm,amssymb}
\usepackage{extarrows}
\usepackage{epic}
\usepackage{indentfirst}
\usepackage{graphicx}
\usepackage{graphics}
\usepackage{pict2e}
\usepackage{tikz}
\usetikzlibrary{arrows}
\pgfarrowsdeclarecombine{triang}{triang}%
{triangle 90}{triangle 90}{triangle 90}{triangle 90}
%%%%%%%%%%

\def\red{\textcolor{red}}
\newtheorem{thm}{Theorem}%[section]
\newtheorem{cor}{Corollary}
\newtheorem{lem}{Lemma}
\newtheorem{prop}{Proposition}

\theoremstyle{definition}
\newtheorem{defn}{Definition}
%%%%%%%%%%%%%%%%%%%%%%%%%%5
\makeatletter

\makeatother
\theoremstyle{remark}
\newtheorem{rem}{Remark}

\numberwithin{equation}{section}

\def\la{\lambda}
\DeclareMathOperator\toht{(31\mhyphen 2)}

\DeclareMathOperator\thot{(2\mhyphen 13)}
%----------------------------------------------------------------------------------------

\newcommand\set[1]{\left\{#1\right\}}

\newcommand\floor[1]{\left\lfloor#1\right\rfloor}

\def\Z{\mathbb{Z}}
\def\N{\mathbb{N}}
\def\S{\mathfrak{S}}

\def\And{\mathfrak{D}}
\def\and{\mathrm{d}}

%%%%%%%%%%%%%

%%%%%%%%%%
\DeclareMathOperator\first{\mathsf{first}}
\DeclareMathOperator\last{\mathsf{last}}

%%%%%%%%%%%%
\DeclareMathOperator\inv{inv}
\DeclareMathOperator\exc{exc}

\DeclareMathOperator\des{des}
\DeclareMathOperator\asc{asc}

\DeclareMathOperator\dd{dd}
\DeclareMathOperator\da{da}
\DeclareMathOperator\peak{pk}
\DeclareMathOperator\valley{val}
\mathchardef\mhyphen="2D
\DeclareMathOperator\les{(31\mhyphen 2)}
\DeclareMathOperator\less{(13\mhyphen 2)}
\DeclareMathOperator\res{(2\mhyphen 13)}
\DeclareMathOperator\ress{(2\mhyphen 31)}

\def\DD{\mathrm{\mathcal{G}}}

%%%%%%%%%%%%%%%%%%%%%%%
\def\312{\les}
\def\132{\less}
\def\231{\ress}
%%%%%%%%%%%%%
\def\Z{\mathbb{Z}}
\def\N{\mathbb{N}}
\def\S{\mathfrak{S}}

\usepackage{cite}% cite together.
\usepackage{hyperref}

%-----------------------------------------------------------------------------------------------------------
\title[On the $\gamma$-coefficients of Br\"and\'en's $(p,q)$-Eulerian polynomials]
{Br\"and\'en's $(p,q)$-Eulerian polynomials,  Andr\'e permutations and  continued fractions}
\author{Qiong Qiong Pan}
\address[Qiong Qiong Pan]{Univ Lyon,  Universit\'{e} Lyon 1, UMR 5208 du CNRS, Institut Camille Jordan,  43, blvd du 11 novembre 1918, F-69622 Villeurbanne Cedex, France}
\email{qpan@math.univ-lyon1.fr}
\author{Jiang Zeng}
\address[Jiang Zeng]{Univ Lyon, Universit\'{e} Lyon 1, UMR 5208 du CNRS,  Institut Camille Jordan,  43, blvd du 11 novembre 1918, F-69622 Villeurbanne Cedex, France}
\email{zeng@math.univ-lyon1.fr}
\date{\today}
%----------------------------------------------------------------------------------------

\begin{document}
\maketitle
\begin{abstract} 
In 2008 Br\"and\'en proved  a $(p,q)$-analogue of the 
 $\gamma$-expansion formula for  
Eulerian polynomials and conjectured the divisibility of 
the $\gamma$-coefficient $\gamma_{n,k}(p,q)$  by
$(p+q)^k$.  As a follow-up, in 2012
Shin and Zeng  showed  that  the fraction $\gamma_{n,k}(p, q)/(p + q)^k$ is a polynomial in $\N[p,q]$.%$p$ and $q$ with nonnegative integer coefficients.
The aim of this paper is to  give a combinatorial interpretation of  the  latter polynomial  in terms of
 Andr\'e permutations, a class of objects  first defined and studied by Foata, 
 Sch\"utzenberger and Strehl in the 1970s. It turns out that our result provides an 
 answer to  a recent  open problem  
 of  Han, which was the impetus of this paper.
 %(Trans. Amer. Math. Soc., 2020).
\end{abstract}
%\tableofcontents
%%%%%%%%%%%%%%%%%%%
%%%%%%%%%%%
\section{Introduction}
The Euler number $E_n$, namely the coefficient of $x^n/n!$ in the expansion of $\sec(x)+\tan(x)$,
is well studied and has many combinatorial interpretations and  different refinements; see~\cite{FS73, Vi81, St09,GSZ, SZ10, SZ12, JV14, MPP}.  It was  Andr\'e~\cite{An79} who  first  proved  that $E_n$ is the number of alternating permutations $a_1\ldots a_n$ of $12\ldots n$, i.e., $a_1>a_2<\cdots$.
Among the many remarkable identities for the Euler numbers there is the less known J-type continued fraction 
\begin{align}\label{ES-CF}
\sum_{n=0}^\infty E_{n+1}x^n=\cfrac{1}{1-x-\cfrac{x^2}{1-2x-\cfrac{3x^2}{1-3x-\cfrac{6x^2}{1-4x-\cfrac{10x^2}{1-\cdots}}}}}.
\end{align}
This formula does not appear in Flajolet's classic~\cite{Fl80} and its connection with 
the work of Stieltjes~\cite{St90} was unveiled only in 2018, see\cite{So18}.  More recently, 
 Han~\cite{Han19}  considered a  $q$-version of \eqref{ES-CF} and 
 asked for a combinatorial interpretation for the corresponding $q$-Euler numbers $E_n(q)$ (see \eqref{q-ES-D} below).  Motivated by  Han's question, we shall study the more general polynomials $D_n(p,q,t)$ defined by the following continued fraction, which is a $(p,q)$-analogue of (1.1):
\begin{align}\label{pq-shift}
\sum_{n=0}^\infty D_{n+1}(p,q,t)x^n
=\cfrac{1}{1-x-\cfrac{\binom{2}{2}_{p,q} t\,x^2}{1-[2]_{p,q} x-
\cfrac{\binom{3}{2}_{p,q}t\,x^2}{1-[3]_{p,q} x-\cfrac{\binom{4}{2}_{p,q}t\,x^2}{1-[4]_{p,q} x-\cfrac{\binom{5}{2}_{p,q} t\,x^2}{1-\cdots}}}}},
\end{align}
where  the $(p,q)$-analogue of $n$ is defined by

 $$
 [n]_{p,q}=\frac{p^n-q^n}{p-q}=\sum_{i+j=n-1}p^iq^j \qquad (n\in \N)
 $$
  and the $(p,q)$-analogue of the binomial coefficient $\binom{n}{k}$ is defined by
$$
\binom{n}{k}_{p,q}=\frac{[n]_{p,q}\ldots [n-k+1]_{p,q}}{[1]_{p,q}\ldots [k]_{p,q}}\qquad (0\leq k\leq n).
$$
Comparing  \eqref{ES-CF} and \eqref{pq-shift}  yields that 
$$
D_{n}(1,1,1)=E_{n} \qquad (n\geq 1).
$$
The $q$-Euler number $E_n(q)$  of Han~\cite{Han19} can be expressed  as
\begin{align}\label{q-ES-D}
E_n(q):=D_{n}(1,q,1)=D_{n}(q,1,1)\qquad  (n\geq 1).
\end{align} 

  The first few values of  $D_n(p,q,t)$  are $D_1(p,q,t)=D_2(p,q,t)=1$, and 
\begin{align}
D_3(p,q,t)&=1+t,\quad 
D_4(p,q,t)=1+(p+q+2)t,\label{eq:3-4}\\
D_5(p,q,t)&=1+  \left( (p+q)^2+2\,(p+q)+3 \right) t+\left( {p}^{2}+pq+{q}^{2}+1 \right) {t}^{2}.\nonumber
\end{align}

It turns out that the polynomials $D_n(p,q,t)$ are related to the $\gamma$-coefficients of 
Br\"and\'en's  $(p,q)$-analogue of Eulerian polynomials~\cite{Bra08}.  In this paper we shall interpret
$D_n(p,q,t)$ 
 in terms of  \emph{Andr\'e permutations}, which were introduced and studied by Foata, Sch\"utzenberger and  Strehl~\cite{FS73, FS74, FS76} in the 1970s.
There are three ingredients in our proof:    
the  connection of these polynomials with the $\gamma$-coefficients of  Br\"and\'en's $(p,q)$-analogue of Eulerian polynomials~\cite{Bra08}, Shin-Zeng's  continued fraction expansion  of  the $\gamma$-coefficients of generalized Eulerian polynomials~\cite{SZ12} and a new action on the permutations without double descents.

Recall that any polynomial $h(t)=\sum_{i=0}^n h_it^i$ satisfying   $h_i=h_{n-i}$ can be expressed uniquely in the form 
$\sum_{i=0}^{\lfloor{(n-1)/2\rfloor}} \gamma_{i} t^i
(1+t)^{n-2i}$. The coefficients $\gamma_{i}$  are called the $\gamma$-coefficients of $h(t)$. 
If the $\gamma$-coefficients  $\gamma_i$ are all nonnegative, then $h(t)$ is said 
to be $\gamma$-positive.  The unimodality of the sequence $(h_0, \ldots, h_n)$ 
 is a direct consequence of $\gamma$-positivity.   
 Let $\S_n$ be the set of permutations  of $[n]:=\set{1,\dots,n}$.
 For  a  permutation $\sigma:=\sigma_1\sigma_2\ldots \sigma_n$ of $[n]$,
the \emph{descent number}  $\des\sigma$ is the number of descent positions, i.e. $i<n$ such that  $\sigma_i>\sigma_{i+1}$, and the \emph{excedance number}  $\exc\sigma$ is the number of excedance
 positions, i.e. $i\in [n]$ such that  $\sigma_i>i$.
Thanks to  the work of MacMahon~\cite{Mac15} and Riordan~\cite{Ri51} we can define 
 the Eulerian polynomials $ A_n(t)$ by
 $$
 A_n(t)=\sum_{\sigma\in \S_n} t^{\des\sigma}=\sum_{\sigma\in \S_n} t^{\exc\sigma}.
 $$
 The following $\gamma$-decompositions for $A_n(t)$ 
 are well-known~\cite[Section~4]{FS73}.
\begin{thm} [Foata and Sch\"utzenberger]
\begin{align}
A_n(t)&=\sum_{k=0}^{\lfloor{n/2\rfloor}}\gamma_{n,k} t^k (1+t)^{n-1-2k}\label{eulerian:gamma}\\
&=\sum_{k=0}^{\lfloor{n/2\rfloor}}2^kd_{n,k} t^k (1+t)^{n-1-2k}, \label{eulerian:andre}
\end{align}
where  $\gamma_{n,k}=2^kd_{n,k}$ and   $d_{n,k}$ are positive integers satisfying  the recurrence
\begin{align}
 \and_{1,0}&=1\quad \textrm{and for $n\geq 2$, $k\geq 0$},\nonumber\\
 \and_{n,k}&=(k+1)\and_{n-1,k}+(n-2k)\and_{n-1,k-1}.
 \end{align} 
Moreover, the sum $\sum_k d_{n,k}$ is precisely the Euler number $E_n$,  see Figure~\ref{fig1}.
\end{thm}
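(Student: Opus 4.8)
The plan is to prove the two displays \eqref{eulerian:andre} and \eqref{eulerian:gamma} simultaneously by induction on $n$, working throughout with generating functions and reading off the combinatorial statements at the end. Define the numbers $d_{n,k}$ by $d_{1,0}=1$, $d_{n,k}=0$ for $k<0$, and $d_{n,k}=(k+1)d_{n-1,k}+(n-2k)d_{n-1,k-1}$ for $n\ge 2$, and set
\[
\widetilde A_n(t):=\sum_{k\ge 0}2^k\,d_{n,k}\,t^k(1+t)^{n-1-2k}.
\]
A one-line induction from the recurrence gives $d_{n,k}=0$ for $k>\lfloor(n-1)/2\rfloor$ (when $n$ is even the factor $n-2k$ kills the would-be boundary term $d_{n,n/2}$), so $\widetilde A_n(t)$ is already in the normalized form $\sum_i\gamma_i t^i(1+t)^{n-1-2i}$ of the Introduction. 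Consequently, once $\widetilde A_n(t)=A_n(t)$ is proved, the uniqueness of the $\gamma$-expansion forces $\gamma_{n,k}=2^kd_{n,k}$, which is \eqref{eulerian:andre}; display \eqref{eulerian:gamma} is the same statement with $\gamma_{n,k}:=2^kd_{n,k}$.

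For the induction I would start from the classical recurrence of the Eulerian polynomials: $A_1(t)=1$ and, for $n\ge 2$,
\[
A_n(t)=\bigl(1+(n-1)t\bigr)A_{n-1}(t)+t(1-t)\,A_{n-1}'(t),
\]
which follows immediately from $a_{n,k}=(k+1)a_{n-1,k}+(n-k)a_{n-1,k-1}$, where $A_n(t)=\sum_k a_{n,k}t^k$. After the base case $\widetilde A_1=1=A_1$, the step reduces to checking the polynomial identity $\widetilde A_n(t)=(1+(n-1)t)\widetilde A_{n-1}(t)+t(1-t)\widetilde A_{n-1}'(t)$. Expanding the left side via $d_{n,k}=(k+1)d_{n-1,k}+(n-2k)d_{n-1,k-1}$, reindexing $k\mapsto k-1$ in the part coming from $d_{n-1,k-1}$, and rewriting $(1-t)(1+t)^m=(1+t)^{m+1}-2t(1+t)^m$ each time differentiation produces a factor $1-t$, one finds that the contributions attached to each fixed $k$ cancel one another term by term; the coefficient $2$ in front of $d_{n-1,k-1}$ is exactly what makes this cancellation close, a reassuring sign that the constant $2^k$ is the right one. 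This proves $\widetilde A_n=A_n$, hence \eqref{eulerian:gamma}--\eqref{eulerian:andre}; that the $d_{n,k}$ are positive integers (positive for $0\le k\le\lfloor(n-1)/2\rfloor$, zero otherwise) is then immediate from the recurrence, and with it the $\gamma$-positivity of $A_n(t)$.

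It remains to show $\sum_k d_{n,k}=E_n$. Here I would put $t=i$ in \eqref{eulerian:andre} and exploit $2i=(1+i)^2$, which makes all the summands uniform:
\[
2^k i^k(1+i)^{\,n-1-2k}=(1+i)^{2k}\,(1+i)^{\,n-1-2k}=(1+i)^{\,n-1},
\]
so $A_n(i)=(1+i)^{\,n-1}\sum_k d_{n,k}$, and the claim becomes the identity $A_n(i)=(1+i)^{\,n-1}E_n$ for $n\ge 1$. I would verify this by comparing the exponential generating function $\sum_{n\ge 0}A_n(t)\frac{z^n}{n!}=\dfrac{1-t}{e^{z(t-1)}-t}$, evaluated at $t=i$, with $\sum_{n\ge 0}E_n\frac{z^n}{n!}=\sec z+\tan z$: writing $(1+i)z=-i(i-1)z$ and using $\sec(-iw)=1/\cosh w$ and $\tan(-iw)=i\tanh w$, both $\dfrac{i-1}{\,i-e^{(i-1)z}\,}$ and $1+\dfrac{1}{1+i}\bigl(\sec((1+i)z)+\tan((1+i)z)-1\bigr)$ reduce to the same rational function of $e^{(i-1)z}$. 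I expect this last identification to be the only step requiring a genuine, if short, computation — everything upstream is formal bookkeeping. If one prefers to avoid the complex exponentials, the identity $A_n(i)=(1+i)^{n-1}E_n$ can simply be cited, or $\sum_k d_{n,k}=E_n$ can be derived by recognising, from its recurrence, $d_{n,k}$ as the number of André permutations of $[n]$ with $k$ descents, together with the classical bijection between André permutations and the alternating permutations counted by $E_n$.

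As a remark, \eqref{eulerian:gamma} and the nonnegativity of its $\gamma$-coefficients also admit a combinatorial proof via the modified Foata--Strehl (valley-hopping) action of $(\Z/2\Z)^n$ on $\S_n$: the orbits are combinatorial cubes, each with a unique representative having no double descent, and summing $t^{\des}$ over the orbit of a representative with $k$ descents gives $t^k(1+t)^{n-1-2k}$, so $\gamma_{n,k}$ is the number of permutations of $[n]$ with $k$ descents and no double descent. The sharper statement $\gamma_{n,k}=2^kd_{n,k}$, however, comes out most transparently from the recurrence, which is why I would organise the proof as above.
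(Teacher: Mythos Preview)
Your argument is correct. The induction step for $\widetilde A_n=A_n$ does close: writing $1+(n-1)t=(1+t)+(n-2)t$ and $1-t=(1+t)-2t$ inside the bracket attached to $d_{n-1,k}$ collapses it to exactly $(k+1)t^k(1+t)^{n-1-2k}+2(n-2-2k)t^{k+1}(1+t)^{n-3-2k}$, matching what the recurrence for $d_{n,k}$ produces after reindexing. The evaluation at $t=i$ is also fine; with $u=e^{(i-1)z}$ and $w=(1+i)z$ one has $e^{iw}=u$, and both sides reduce to $(1-u)/(u-i)$ after factoring $(1-i)u^2+2iu-(1+i)=(u-1)(1-i)(u+i)$ against $u^2+1=(u-i)(u+i)$.

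That said, there is nothing to compare your proof against: the paper does \emph{not} prove this theorem. It is stated as the classical result of Foata and Sch\"utzenberger with a citation to \cite{FS73}, and the authors take it as background. The related facts the paper does establish later --- Theorem~\ref{c1} and Corollary~\ref{coro: gamma} via the MFS-action --- specialize at $p=q=1$ to the $\gamma$-expansion \eqref{eulerian:gamma} with $\gamma_{n,k}=|\mathcal G_{n,k}|$, and Proposition~\ref{x-fact-andre} together with Proposition~1 recover $d_{n,k}=|\mathfrak D_{n,k}|$ and hence $\sum_k d_{n,k}=|\mathfrak D_n|=E_n$; but the recurrence for $d_{n,k}$ and the factor $2^k$ are simply quoted.

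So your write-up is a self-contained generating-function proof where the paper offers none. The combinatorial route you sketch in your final remark (MFS orbits for \eqref{eulerian:gamma}, Andr\'e permutations for the refinement $\gamma_{n,k}=2^k d_{n,k}$) is precisely the one Foata--Sch\"utzenberger--Strehl used and the one underlying the paper's $(p,q)$-generalizations; your recurrence-based argument trades that bijective insight for an elementary verification that avoids any appeal to Andr\'e permutations.
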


In the last two decades although  various refinements of \eqref{eulerian:gamma} have been given in combinatorics and geometry (see \cite{ PRW,  Pe15, At18,SW20}), 
similar extension of  \eqref{eulerian:andre} does not seem to be known. 
In this paper we will provide two 
refinements  of  \eqref{eulerian:andre} (see \eqref{gamma-d} and \eqref{gamma-qd}).

\begin{figure}[t]\label{fig1}
  $$
  \begin{array}{c|ccccc}
\hbox{$n$}\backslash\hbox{$k$}&0&1&2&3\\
\hline
1& 1&\\
2& 1&\\
3& 1&2&&\\
4& 1&8&&\\
5& 1&22&16&\\
6& 1&52&136&\\
7& 1&114&720&272\\
\end{array}
\qquad\qquad\qquad
 \begin{array}{c|cccc|c}
 n\diagdown  k &0&1&2&3&E_n=\sum_k \and_{n,k}\\
 \hline
 1&1&&&&1\\
 2&1&&&&1\\
 3&1&1&&&2\\
 4&1&4&&&5\\
 5&1&11&4&&16\\
 6&1&26&34&&61\\
 7&1&57&180&34&272\\
 \end{array}
 $$
 \caption{The first values of $\gamma_{n,k}$ (left), $d_{n,k}$ and  $E_n$ for $0\leq 2k< n\leq 7$.}
 \end{figure}
 
 %%%%%%%%%%%%%%%%%

\begin{defn}
For  a permutation $\sigma=\sigma_1\ldots \sigma_n$ of $[n]$ with  $\sigma_0=\sigma_{n+1}=0$,  the  entry $\sigma_i$ is 
\begin{itemize}
\item a \emph{peak}  if $\sigma_{i-1} < \sigma_{i}$ and $\sigma_{i} > \sigma_{i+1}$;
\item a \emph{valley}  if $\sigma_{i-1} > \sigma_{i}$ and $\sigma_{i} < \sigma_{i+1}$;
\item a \emph{double ascent}  if $\sigma_{i-1} < \sigma_{i}$ and $\sigma_{i} < \sigma_{i+1}$;
\item a \emph{double descent} if $\sigma_{i-1} > \sigma_{i}$ and $\sigma_{i} > \sigma_{i+1}$.
\end{itemize}
\end{defn}
Let $\peak\sigma$ (resp. $\valley \sigma$, $\da\sigma$, $\dd\sigma$) denote  the number of peaks (resp. valleys, double ascents, double descents) in~$\sigma$. 
Note that $\des\sigma=\valley\sigma+\dd\sigma$ and $\peak \sigma=\valley \sigma+1$.
Let $\mathcal{G}_{n,k}=\{\sigma\in \S_n: \valley\sigma=k, \dd\sigma=0\}$.  For example, the elements of $\mathcal{G}_{4,1}$ are 
$$
13\red{2}4, \quad 14\red{2}3,  \quad 2\red{1}34, \quad  23\red{1}4,   \quad 24\red{1}3, \quad 3\red{1}24,  \quad 34\red{1}2,  \quad 4\red{1}23.
$$
%Then   Foata and Sch\"utzenberger~\cite{FS73}
%proved that $\gamma_{n,k}$ is the cardinality of $\mathcal{G}_{n,k}$.
%They also defined the \emph{Andr\'e permutations} (of the second kind) to give a combinatorial interpretation of $d_{n,k}$.

\begin{defn}
For a permutation $\sigma$ of $[n]$,  let  $\sigma_{[k]}$ be the \emph{subword} of $\sigma$ consisting of $1,\dots, k$ in the order they appear in $\sigma$. Then, 
the  permutation $\sigma$ is an  Andr\'e permutation 
if $\sigma_{[k]}$ has no double descents (and ends with an ascent) for all $1\leq k\leq n$.
\end{defn}
For example, the permutation $\sigma = 43512$ is not Andr\'e  
since the subword $4312$ of $\sigma$ contains a double descent $3$, while the permutation $\tau = 31245$ is Andr\'e.  Let $\And_n$ be the set of Andr\'e permutations of  $[n]$.
For instance, the set $\And_{4}$ consists of five permutations  
$1234, 1423, 3124, 3412$, and  $4123$.  
%Foata and Strehl~\cite[p. 128]{FS76} proved that 
%$d_{n,k}$ is the number of Andr\'e permutations of $[n]$ with $k$ descents.
Let $\And_{n,k}$ be the set of Andr\'e permutations of $[n]$ with $k$ descents. For example, the elements of
$\And_{5,2}$ are 
$$
3\red{1}5\red{2}4,\; 4\red{1}5\red{2}3,\; 5\red{1}4\red{2}3,\,5\red{3}4\red{1}2.
$$

\begin{prop}[\cite{FS73,FS76}]
The coefficients $\gamma_{n,k}$  and $d_{n,k}$ 
equal  the cardinalities of $\mathcal{G}_{n,k}$ and  $\And_{n,k}$, respectively.
\end{prop}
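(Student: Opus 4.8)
I would prove the two equalities $\abs{\mathcal{G}_{n,k}}=\gamma_{n,k}$ and $\abs{\And_{n,k}}=d_{n,k}$ by separate arguments: a group action on $\S_n$ for the first, and the recurrence recorded in the Theorem for the second.

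\emph{The equality $\abs{\mathcal{G}_{n,k}}=\gamma_{n,k}$.} The plan is to use the Foata--Strehl ``valley-hopping'' action. Fix the convention $\sigma_0=\sigma_{n+1}=0$. For $x\in[n]$ define $\varphi_x\colon\S_n\to\S_n$ by: $\varphi_x(\sigma)=\sigma$ whenever $x$ is a peak or a valley of $\sigma$; if $x$ sits at position $i$ as a double ascent, let $j\ge i+1$ be maximal with $\sigma_{i+1},\dots,\sigma_j$ all larger than $x$, and move $x$ into the gap between positions $j$ and $j+1$ (this is well defined since $\sigma_{n+1}=0<x$), which makes $x$ a double descent; symmetrically when $x$ is a double descent. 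First I would record the routine facts: each $\varphi_x$ is an involution, the $\varphi_x$ pairwise commute, applying $\varphi_x$ leaves the peak set and valley set of $\sigma$ unchanged and only switches the type of $x$ between double ascent and double descent, and one $\varphi_x$ changes $\des$ by $\pm1$. Hence the $\varphi_x$ generate a commutative group acting on $\S_n$, and each orbit $\mathcal{O}$ has a unique element $\widehat\sigma$ with $\dd(\widehat\sigma)=0$ (turn every double descent into a double ascent); for it $\peak(\widehat\sigma)=\valley(\widehat\sigma)+1$, so $\da(\widehat\sigma)=n-1-2\valley(\widehat\sigma)$, and toggling these double ascents independently, each toggle adding one descent, gives $\abs{\mathcal{O}}=2^{n-1-2\valley(\widehat\sigma)}$ and $\sum_{\sigma\in\mathcal{O}}t^{\des\sigma}=t^{\valley(\widehat\sigma)}(1+t)^{n-1-2\valley(\widehat\sigma)}$. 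Summing over orbits,
\begin{equation*}
A_n(t)=\sum_{\sigma\in\S_n}t^{\des\sigma}=\sum_{k\ge 0}\abs{\mathcal{G}_{n,k}}\,t^{k}(1+t)^{n-1-2k},
\end{equation*}
and comparing with \eqref{eulerian:gamma}, in view of the uniqueness of the $\gamma$-expansion, gives $\abs{\mathcal{G}_{n,k}}=\gamma_{n,k}$.

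\emph{The equality $\abs{\And_{n,k}}=d_{n,k}$.} By induction on $n$ it is enough to check that $\abs{\And_{n,k}}$ satisfies $\abs{\And_{1,0}}=1$ and $\abs{\And_{n,k}}=(k+1)\abs{\And_{n-1,k}}+(n-2k)\abs{\And_{n-1,k-1}}$, the recurrence for $d_{n,k}$. The base case is $\And_1=\{1\}$. For the step, delete the largest letter $n$ from $\sigma\in\And_n$ to get a word $\tau$ on $[n-1]$; removing $n$ preserves the relative order of $1,\dots,j$, so $\sigma_{[j]}=\tau_{[j]}$ for all $j\le n-1$, whence $\tau\in\And_{n-1}$ and this deletion is injective. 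Conversely, reinserting $n$ into $\tau$ changes only the full subword $\sigma_{[n]}=\sigma$, and I would verify that the result is an Andr\'e permutation exactly when $n$ is placed at the very end of $\tau$, or immediately before a valley, or immediately before a double ascent of $\tau$: inserting $n$ just before a peak makes that peak a double descent, while the listed placements preserve $\dd=0$. There are $1+\valley\tau+\da\tau=n-1-\des\tau$ such slots; inserting $n$ at the end or just before a valley leaves $\des$ unchanged (the descent at $\tau_{m-1}\tau_m$ is replaced by one at $n\,\tau_m$), while inserting $n$ just before a double ascent raises $\des$ by $1$, and there are $1+\valley\tau=1+\des\tau$ slots of the first type and $\da\tau=n-2-2\des\tau$ of the second. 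Summing over $\tau\in\And_{n-1,k}$ and $\tau\in\And_{n-1,k-1}$ gives the required recurrence, hence $\abs{\And_{n,k}}=d_{n,k}$; with $\gamma_{n,k}=2^kd_{n,k}$ this also yields $\abs{\mathcal{G}_{n,k}}=2^k\abs{\And_{n,k}}$.

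The delicate steps I expect are: for the first equality, proving that $\varphi_x$ is a well-defined involution which interchanges double ascent and double descent at $x$ while fixing the types of all other entries (this is precisely what forces the orbit sizes and the orbit $\des$-enumerators to be as claimed); and for the second, the exact inventory of the admissible reinsertion positions of $n$ together with the $\pm$ bookkeeping of $\des$. In both arguments the verification is kept manageable by the same observation: inserting $n$ into $\tau$ — or hopping a value inside $\sigma$ — affects only the single subword $\sigma_{[n]}$, every shorter subword being inherited unchanged.
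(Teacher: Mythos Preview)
The paper does not actually prove this proposition: it is stated with attribution to \cite{FS73,FS76} and no argument is supplied. Your proof is correct and self-contained, so there is nothing to compare against in the paper itself.

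That said, your first argument is precisely the MFS-action that the paper later recalls in Section~2.1 and uses (in its $(p,q)$-refined form) to prove Theorem~\ref{c1} and Corollary~\ref{coro: gamma}; specialising those results to $p=q=1$ recovers exactly your orbit-sum identity $A_n(t)=\sum_k |\mathcal{G}_{n,k}|\,t^k(1+t)^{n-1-2k}$. Your second argument, verifying the recurrence $|\And_{n,k}|=(k+1)|\And_{n-1,k}|+(n-2k)|\And_{n-1,k-1}|$ by inserting and deleting the letter $n$, is not in the paper at all; the paper instead builds (Proposition~\ref{key1}) an explicit bijection $\And_{n,k}\times 2^{[k]}\to\mathcal{G}_{n,k}$, which a posteriori gives $|\mathcal{G}_{n,k}|=2^k|\And_{n,k}|$ and hence, combined with $\gamma_{n,k}=2^k d_{n,k}$, yields the same conclusion. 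Your route is more elementary and entirely adequate here; the paper's bijection is designed for the finer $(p,q)$-statistics and is overkill for the cardinality statement alone.

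One small point worth making explicit in your write-up: when you list the admissible reinsertion slots for $n$, you should note that since $\tau\in\And_{n-1}$ has $\dd\tau=0$, every letter of $\tau$ is a peak, a valley, or a double ascent, so ``end, before a valley, before a double ascent'' together with ``not before a peak'' really does exhaust all $n$ slots. You implicitly use this, and it is what makes the count $1+\valley\tau+\da\tau=n-1-\des\tau$ come out right.
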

For  $\sigma = \sigma_1 \dots \sigma_n \in \S_n$,
the statistic $\les \sigma$ %(resp.  $\less \sigma$ )
 is the number of pairs $(i,j)$ such that $2\leq i<j\leq n$ and $ \sigma_{i-1}>\sigma_j>\sigma_i$.
% (resp.  $ \sigma_{i-1}<\sigma_j<\sigma_i$). 
 Similarly,
the statistic $\res\sigma $ %(resp. $\ress \sigma $)
is the number of pairs $(i,j)$ such that $1\leq i<j\leq n-1$ and $\sigma_{j+1}>\sigma_i>\sigma_j$. %(resp. $\sigma_{j+1}<\sigma_i<\sigma_j$).
 In 2008 Br\"and\'en~\cite{Bra08} defined  a $(p,q)$-analogue of Eulerian polynomials and 
proved a $(p,q)$-analogue of \eqref{eulerian:gamma}. In this paper we shall use  the following 
 variant  of Br\"and\'en's $(p,q)$-Eulerian polynomials
in \cite{SZ12}
\begin{align}
A_n(p,q,t):=\sum_{\sigma\in \S_n} p^{\res \sigma} q^{\les \sigma}t^{\des\sigma}.
\end{align}
For $0\leq k\leq (n-1)/2$ define the $(p,q)$-analogue of  
$\gamma_{n,k}$ and $d_{n,k}$ in \eqref{eulerian:gamma} and \eqref{eulerian:andre} by
\begin{align} 
\gamma_{n,k}(p,q)&=\sum_{\sigma\in \mathcal{G}_{n,k}} p^{\res \sigma} q^{\les \sigma},\label{eq:combin}\\
d_{n,k}(p,q)&=\sum_{\sigma\in \And_{n,k}}p^{\res\sigma}q^{\312\sigma-k}.
\end{align} 
Our main results are the following two theorems.
\begin{thm} \label{thm:main1} We have 
\begin{align}
A_n(p,q,t)&=\sum_{k=0}^{\lfloor{(n-1)/2\rfloor}}\gamma_{n,k}(p,q)
t^{k}(1+t)^{n-1-2k}\label{gamma-pq}\\
&=\sum_{k=0}^{\lfloor{(n-1)/2\rfloor}}(p+q)^k\and_{n, k}(p,q)\label{gamma-d}
t^{k}(1+t)^{n-1-2k}.
\end{align}
%and 
%\begin{align}\label{gamma-d}
%(p+q)^k\and_{n, k}(p,q)=\gamma_{n,k}(p,q).
%\end{align}
\end{thm}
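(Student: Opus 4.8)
\textbf{Proof plan for Theorem~\ref{thm:main1}.} The identity \eqref{gamma-pq} is the $\gamma$-expansion of the symmetric polynomial $A_n(p,q,t)$, and the identity \eqref{gamma-d} is merely \eqref{gamma-pq} together with the arithmetic statement $\gamma_{n,k}(p,q)=(p+q)^k\,\and_{n,k}(p,q)$. So the plan splits into two independent parts. For \eqref{gamma-pq} I would first establish that $A_n(p,q,t)$ satisfies the same continued-fraction expansion as the one governing the $\gamma$-coefficients of generalized Eulerian polynomials in Shin--Zeng~\cite{SZ12}; concretely, I would show that the generating function $\sum_{n\ge 1}A_n(p,q,t)\,x^{n-1}$ has a J-fraction whose coefficients are the appropriate $(p,q)$-analogues, by exhibiting a Fran\c{c}on--Viennot-type bijection between permutations in $\S_n$ (weighted by $p^{\res}q^{\les}t^{\des}$) and Motzkin/Laguerre paths. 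This is essentially the content of \cite{Bra08, SZ12} transported to our normalization, and the symmetry $h_i=h_{n-1-i}$ of $A_n(p,q,t)$ (which is classical, coming from the involution reversing a permutation or from the $\les\leftrightarrow\res$ symmetry) then guarantees a unique $\gamma$-expansion with coefficients that, by the Shin--Zeng analysis, are precisely the $\gamma_{n,k}(p,q)$ of \eqref{eq:combin} counted over $\mathcal{G}_{n,k}$.

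For the arithmetic identity $\gamma_{n,k}(p,q)=(p+q)^k\,d_{n,k}(p,q)$, the plan is combinatorial: construct an explicit weight-preserving map
\[
\Phi:\ \mathcal{G}_{n,k}\ \longrightarrow\ \And_{n,k}\times\{p,q\}^{k}
\]
so that if $\Phi(\sigma)=(\tau,(\varepsilon_1,\dots,\varepsilon_k))$ then $p^{\res\sigma}q^{\les\sigma}=\bigl(\prod_{i=1}^k\varepsilon_i\bigr)\cdot p^{\res\tau}q^{\,\312\tau-k}$, and $\tau$ has $k$ descents. The construction should be built out of "a new action on the permutations without double descents" advertised in the introduction: starting from $\sigma\in\mathcal{G}_{n,k}$, one repeatedly locates a valley that violates the Andr\'e condition (i.e. the smallest label whose subword has a double descent) and performs a local move — presumably swapping a valley entry with a neighbor, or transferring it across a block — which strictly increases the André-ness while recording one bit $\varepsilon_i\in\{p,q\}$ according to whether the move changes $\res$ or $\les$ by one. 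Running the action to its fixed point lands in $\And_{n,k}$, and the $k$ recorded bits account for the factor $(p+q)^k$; one then checks the process is reversible, so $\Phi$ is a bijection. Summing the weight identity over $\mathcal{G}_{n,k}$ gives $\gamma_{n,k}(p,q)=(p+q)^k d_{n,k}(p,q)$, and combined with \eqref{gamma-pq} this yields \eqref{gamma-d}.

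The main obstacle, I expect, is designing the local move in the André-correcting action so that it simultaneously (a) has a clean, provably terminating and invertible dynamics on $\mathcal{G}_{n,k}$, (b) preserves the descent number $k$ exactly, and (c) tracks the change in $(\res,\les)$ by a single controlled unit at each step, so that the two statistics on $\sigma$ reassemble correctly as $\res\tau$, $\312\tau-k$, plus one letter of $\{p,q\}$ per step. Getting the statistic bookkeeping to balance — in particular verifying that the "$-k$" normalization in $d_{n,k}(p,q)=\sum_{\tau}p^{\res\tau}q^{\,\312\tau-k}$ is exactly what the $k$ moves contribute — is the delicate part; an alternative, more algebraic route would be to compare J-fraction coefficients directly (the $\binom{j+1}{2}_{p,q}t$ numerators factor through $(p+q)$ in a way compatible with the André J-fraction from \cite{SZ12}), but the combinatorial action is what makes the refinement of \eqref{eulerian:andre} transparent, so I would pursue that and fall back on the continued-fraction comparison as a cross-check.
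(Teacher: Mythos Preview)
Your plan is essentially the paper's own approach, with one cosmetic deviation. For \eqref{gamma-pq} the paper does \emph{not} go through the J-fraction; it invokes the MFS-action directly (Theorem~\ref{c1} and Corollary~\ref{coro: gamma}): each orbit of the Br\"and\'en action contains a unique representative $\tilde\sigma\in\mathcal G_{n,k}$, and the orbit sum is $p^{\res\tilde\sigma}q^{\les\tilde\sigma}t^{k}(1+t)^{n-1-2k}$ because $(\res,\les)$ are constant on orbits. Your continued-fraction route (via \cite{SZ12}) is a valid alternative and the paper acknowledges it, but the orbit argument is shorter and gives the combinatorial interpretation \eqref{eq:combin} for free, without needing symmetry of $A_n(p,q,t)$ as a separate input.

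For \eqref{gamma-d} your plan matches the paper almost exactly: the paper builds the bijection in the forward direction $\varphi:\And_{n,k}\times 2^{[k]}\to\mathcal G_{n,k}$ (Proposition~\ref{key1}), but the inverse algorithm you describe---repeatedly correcting ``bad'' valleys (those violating the Andr\'e condition (iii) of Proposition~\ref{x-fact-andre}) until none remain---is precisely what the paper gives as $\varphi^{-1}$ in the proof of Proposition~\ref{key1}. One clarification on your statistic bookkeeping: each local move changes $\res$ by $+1$ \emph{and} $\les$ by $-1$ simultaneously (or vice versa), not one or the other; the recorded bit is therefore ``did this valley need a move or not'', i.e.\ the subset $S$ of bad valleys. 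Consequently the weight transfer is $p^{\res\tau}q^{\les\tau}=p^{|S|}q^{\,k-|S|}\cdot p^{\res\sigma}q^{\les\sigma-k}$ for $\tau\in\mathcal G_{n,k}$ and $\sigma\in\And_{n,k}$, and summing over $S\subseteq[k]$ produces the factor $(p+q)^k$---this is exactly why the ``$-k$'' in the definition of $d_{n,k}(p,q)$ appears. Your identification of the main obstacle (terminating, invertible dynamics with one-unit statistic shift per step) is on target; the paper handles it by splitting valleys into two types and processing type~II valleys in decreasing order.
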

\begin{rem}
An equivalent $\gamma$-expansion of \eqref{gamma-pq} was  proved  by Br\"and\'en~\cite{Bra08} using the \emph{modified Foata-Stehl} action.  
The divisibility of  $\gamma_{n,k}(p,q)$ by $(p+q)^k$ was conjectured by 
Br\"and\'en~(\emph{op.cit.}) and proved by Shin and Zeng~\cite{SZ12} using the combinatorial theory of
continued fractions.
\end{rem}
\begin{thm}\label{thm:main2}  We have 
\begin{align}
D_{n}(p,q,t)&=\sum_{k=0}^{\lfloor (n-1)/2\rfloor} \and_{n,k}(p,q)t^k\label{eq:1}\\
&=\sum_{\sigma\in \And_{n}}p^{\res\sigma}q^{\312\sigma-\des\sigma}t^{\des\sigma}.\label{eq:2}
\end{align}
\end{thm}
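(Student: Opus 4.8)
The plan is to prove Theorem~\ref{thm:main2} by combining the continued fraction \eqref{pq-shift} with Shin--Zeng's continued fraction expansion of the $\gamma$-coefficients of generalized Eulerian polynomials, and with the combinatorial meaning of $\and_{n,k}(p,q)$ in terms of Andr\'e permutations. Identity \eqref{eq:1} is just the definition of $D_n(p,q,t)$ unpacked: we must show that the polynomials generated by the J-fraction \eqref{pq-shift} have $\and_{n,k}(p,q)$ as the coefficient of $t^k$. Identity \eqref{eq:2} is then a rewriting of \eqref{eq:1}, since $\sum_k \and_{n,k}(p,q) t^k = \sum_{\sigma\in\And_n} p^{\res\sigma} q^{\les\sigma-\des\sigma} t^{\des\sigma}$ by definition of $\and_{n,k}(p,q)$ together with the fact (from the Proposition) that $\And_{n,k}$ partitions $\And_n$ by descent number.

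\textbf{Step 1: Extract $\sum_k \and_{n,k}(p,q) t^k$ from Theorem~\ref{thm:main1}.} From \eqref{gamma-d} we know $A_n(p,q,t) = \sum_k (p+q)^k \and_{n,k}(p,q)\, t^k (1+t)^{n-1-2k}$. The standard trick for passing from a $\gamma$-expansion back to the $\gamma$-vector is a substitution: writing $t = x/(1+x)$, or equivalently using the homogenized form, one has the generating-function identity that converts the polynomial $\sum_k \gamma_k t^k (1+t)^{n-1-2k}$ into $\sum_k \gamma_k t^k$ by a change of variable. The cleanest route is through the known exponential (or J-fraction) generating function: Shin and Zeng~\cite{SZ12} give a continued fraction for $\sum_n A_n(p,q,t) z^n$, and extracting the $\gamma$-coefficients amounts to applying the substitution that kills the $(1+t)$ factors. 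I would make this precise by comparing Shin--Zeng's J-fraction for the generalized Eulerian polynomials (specialized to the $(p,q,t)$ case at hand) with \eqref{pq-shift}: the claim is that after the substitution extracting $\gamma$-coefficients, the J-fraction of $A_n(p,q,t)$ becomes exactly \eqref{pq-shift}, with $(p+q)^k$ absorbed so that the coefficient of $t^k$ in $D_n(p,q,t)$ is precisely $\and_{n,k}(p,q)$ rather than $(p+q)^k\and_{n,k}(p,q)$.

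\textbf{Step 2: Match continued fraction data.} Concretely, I expect to verify that the J-fraction whose $n$-th polynomial is $\sum_k \and_{n,k}(p,q) t^k$ has the contraction/linearization coefficients $b_i = [i]_{p,q}$ (the coefficient of $x$) and $\lambda_i = \binom{i+1}{2}_{p,q} t$ (the coefficient of $x^2$), matching \eqref{pq-shift}. This is where Shin--Zeng's machinery enters: their expansion of $\gamma$-coefficients of generalized Eulerian polynomials gives exactly such a J-fraction, and the point $p=q=1$ recovers Foata--Sch\"utzenberger's \eqref{ES-CF} with $b_i = i$, $\lambda_i = \binom{i+1}{2}$, consistent with the recurrence for $\and_{n,k}$ in the Theorem of Foata--Sch\"utzenberger. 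Once the J-fraction coefficients agree, uniqueness of J-fraction expansions forces $D_n(p,q,t) = \sum_k \and_{n,k}(p,q) t^k$, giving \eqref{eq:1}; and then \eqref{eq:2} follows by the remark above, i.e.\ by regrouping $\And_n = \bigsqcup_k \And_{n,k}$ and using $q^{\les\sigma - k} t^k = q^{\les\sigma-\des\sigma} t^{\des\sigma}$ on $\And_{n,k}$.

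\textbf{Main obstacle.} The delicate part is Step~1--2: justifying the precise bookkeeping of the factor $(p+q)^k$ under the $\gamma$-extraction substitution, and confirming that the J-fraction coefficients are genuinely $[i]_{p,q}$ and $\binom{i+1}{2}_{p,q}t$ and not some $p\leftrightarrow q$ twist or a shifted index. I would nail this down by checking the first few cases against \eqref{eq:3-4} (so $\and_{3,0}=\and_{3,1}=1$, $\and_{4,0}=1$, $\and_{4,1}=p+q+2$, matching $D_3,D_4$) and then invoking the uniqueness of the J-fraction expansion of a Hankel-nondegenerate sequence to upgrade the finite check plus the Shin--Zeng identity into the full statement. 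Everything else is formal manipulation of continued fractions together with the already-cited Proposition identifying $\And_{n,k}$ with the support of $\and_{n,k}(p,q)$.
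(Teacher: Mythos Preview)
Your approach is correct and matches the paper's proof. The paper resolves your ``main obstacle'' by working with the six-variable polynomial $A_n(p,q,t,u,v,w)$ of \eqref{eq:dfA}: substituting $(t,u,v,w)=(p+q,0,1,t)$ in \eqref{eq:gamma} gives $A_n(p,q,p+q,0,1,t)=(p+q)^{n-1}\sum_k d_{n,k}(p,q)\,t^k$, and the same substitution in the Shin--Zeng continued fraction \eqref{eq:master} followed by the rescaling $x\mapsto x/(p+q)$ (using $[n]_{p,q}[n+1]_{p,q}/(p+q)=\binom{n+1}{2}_{p,q}$) lands exactly on \eqref{pq-shift}, so no case-checking is needed.
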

\begin{rem}
It is not difficult to see that $\312\sigma\geq \des\sigma$ for any 
$\sigma\in \And_n$, see (iii) of Proposition~\ref{x-fact-andre}. 
\end{rem}
\begin{ex}
We enumerate the  Andr\'e permutations  in $\And_3$ and $\And_4$ with  their number of  patterns $\res$ and $\312$.
The valleys are in boldface.
\begin{center}
\begin{tabular}{|c|c|c|c|c|}
%\multicolumn{4}{c}{$n=3$}\\
\hline
$\sigma\in \And_3$ & $\res\sigma$ & $\312\sigma$& $\des\sigma$\\\hline 
$123$ & $0$ & $0$&0\\\hline
$3\mathbf{1}2$ & $0$ & $1$&$1$\\\hline
\end{tabular}
\qquad 
%%%%%%%%%%%%%%%%%%%%%
\begin{tabular}{|c|c|c|c|}
%\multicolumn{3}{c}{$n=4$}\\
\hline
$\sigma\in \And_4$ &  $\res\sigma$ &$\312\sigma$&$\des\sigma$\\\hline
$1234$ &  $0$ & $0$&0\\\hline
$14\mathbf{2}3$ &  $0$ & $1$&$1$\\\hline
$3\mathbf{1}24$ &  $1$ & $1$&$1$\\\hline
$34\mathbf{1}2$ &  $0$ & $1$&$1$\\\hline
$4\mathbf{1}23$ &  $0$ & $2$&$1$\\\hline
\end{tabular}
\end{center}
These are in  accordance with \eqref{eq:3-4}.
\end{ex}
\medskip

Combining Theorem~\ref{thm:main1} with Theorem~1 in \cite{SZ16}, which is \eqref{gamma-q} below, 
 we derive a $q$-analogue of 
\eqref{eulerian:gamma} and \eqref{eulerian:andre}.
\begin{cor} We have 
\begin{align}
\sum_{\sigma\in \S_n} q^{(\inv -\exc)\sigma}t^{\exc\sigma}
&=\sum_{k=0}^{\lfloor{(n-1)/2\rfloor}}\gamma_{n,k}(q^2,q)
t^{k}(1+t)^{n-1-2k}\label{gamma-q}\\
&=\sum_{k=0}^{\lfloor{(n-1)/2\rfloor}}(1+q)^k\and_{n, k}(q)
t^{k}(1+t)^{n-1-2k},\label{gamma-qd}
\end{align}
where 
$$
\and_{n, k}(q)=\sum_{\sigma\in \And_{n,k}}q^{2\res\sigma+\312\sigma}.
$$
\end{cor}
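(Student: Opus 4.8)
The plan is to derive the corollary from Theorem~\ref{thm:main1} by the substitution $p=q^2$, taking \eqref{gamma-q} for granted as Theorem~1 of \cite{SZ16}. The first displayed equality of the corollary is literally \eqref{gamma-q}, so the only thing left to establish is that the right-hand side of \eqref{gamma-q} coincides with the right-hand side of \eqref{gamma-qd}.

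First I would invoke the uniqueness of the $\gamma$-expansion recalled in the introduction: since by Theorem~\ref{thm:main1} the two expansions \eqref{gamma-pq} and \eqref{gamma-d} both represent $A_n(p,q,t)$ in the basis $\{t^k(1+t)^{n-1-2k}\}_k$, comparing them gives the polynomial identity $\gamma_{n,k}(p,q)=(p+q)^k\,\and_{n,k}(p,q)$ valid for all $p,q$. Specializing at $p=q^2$ yields $\gamma_{n,k}(q^2,q)=(q^2+q)^k\,\and_{n,k}(q^2,q)$.

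Next I would unwind the definition $\and_{n,k}(p,q)=\sum_{\sigma\in\And_{n,k}}p^{\res\sigma}q^{\les\sigma-k}$ at $p=q^2$: every term becomes $q^{2\res\sigma+\les\sigma-k}$, so $\and_{n,k}(q^2,q)=q^{-k}\sum_{\sigma\in\And_{n,k}}q^{2\res\sigma+\les\sigma}=q^{-k}\,\and_{n,k}(q)$ with $\and_{n,k}(q)$ exactly as in the statement. Combining this with the previous identity and writing $(q^2+q)^k=q^k(1+q)^k$, the factors $q^k$ and $q^{-k}$ cancel, giving $\gamma_{n,k}(q^2,q)=(1+q)^k\,\and_{n,k}(q)$; substituting this into \eqref{gamma-q} yields \eqref{gamma-qd}.

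There is no serious obstacle here: the proof is a direct specialization once Theorem~\ref{thm:main1} and \eqref{gamma-q} are in hand. The only point to verify carefully is that the exponent shift $-k$ built into the definition of $\and_{n,k}(p,q)$ is precisely the shift needed for the factor $q^k$ coming from $q^2+q$ to disappear, leaving $\and_{n,k}(q)\in\N[q]$ --- the natural $q$-analogue of the integrality relation $\gamma_{n,k}=2^kd_{n,k}$ in \eqref{eulerian:andre}.
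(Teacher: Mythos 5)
Your proposal is correct and follows the same route as the paper: the first equality is quoted from Theorem~1 of \cite{SZ16}, and the second follows from Theorem~\ref{thm:main1} by uniqueness of the $\gamma$-expansion, giving $\gamma_{n,k}(p,q)=(p+q)^k\and_{n,k}(p,q)$, and then specializing $p=q^2$ so that $(q^2+q)^k=q^k(1+q)^k$ absorbs the $q^{-k}$ coming from the exponent shift in $\and_{n,k}(q^2,q)$. The paper states this derivation in one line; your write-up just makes the bookkeeping explicit, and it is accurate.
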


\medskip

By \eqref{q-ES-D} and Theorem~\ref{thm:main2} we derive  two interpretations for Han's $q$-Euler numbers\cite{Han19}.
%%%%%%%%%%%%%%%%%%%%
\begin{cor} We have 
\begin{align}
E_{n}(q)&=\sum_{\sigma\in \And_{n}}q^{\res\sigma}\\
&=\sum_{\sigma\in \And_{n}}q^{\312\sigma-\des\sigma}.
\end{align}
\end{cor}
In Section~3 we shall give a triple sum formula for $D_n(1,q,t)$
 (cf. Theorem~\ref{formula(p=1)}) and
also a simple sum  formula for $D_n(1,-1,t)$ (cf. Theorem~\ref{formula(-1)}).
%%%%%%%%%%%
\section{Proof of main Theorems}%~\ref{thm:main1} and \ref{thm:main2}}
\subsection{x-factorization and MFS-action}
We recall  some definitions from \cite{FS73, FS74}.
A permutation  $w$ of a finite subset $\{a_1<a_2<\cdots <a_n\}$ of $\N$
 is a word $w=x_1\ldots x_n$.
The word $u$ obtained by juxtaposing two words $v$ and $w$ in
this order is written $u = v w$. The word $v$ (resp. $w$) is the left (resp. right)
factor of $u$. More generally, a factorization of length $q$ ($q>0$) of a word $w$
is any sequence $(w_1, w_2, ..., w_q)$ of words (some of them possibly empty)
such that the juxtaposition product $w_1 w_2 ... w_q$ is equal to $w$.
The following definition was given as a lemma in~\cite[Lemma~1]{FS74}.
\begin{defn}%[\cite[Lemma~1]{FS74}]
Let $w = x_1x_2 ... x_n$ ($n > 0$) be a permutation and $x$ be one of
the letters $x_i$ ($1 < i< n$). Then $w$ has a unique factorization $(w_1 , w_2 , x, w_4, w_5)$ of length $5$, called its $x$-factorization, which is characterized by the three
properties
\begin{itemize}
\item[(i)] $w_1$ is empty or its last letter is less than $x$;
\item[(ii)] $w_2$ (resp. $w_4$) is empty or all its letters are greater than $x$;
\item[(iii)] $w_5$ is empty or its first letter is less than $x$.
\end{itemize}
\end{defn}
%%%%%%%%%%%%%%%%%%%%%%%
For instance, for $x=4$ the $x$-factorization of $w=76314582$ is
given by $(7631, \;\epsilon, \;4, \;58, \;2)$, where $\epsilon$ denotes the empty word.
%%%%%%%%%%%%%%%%%%%%%%%
It is not difficult to check the following facts about $x$-factorization (see \cite{FS76}).
\begin{prop}
Let $(w_1 , w_2 , x, w_4, w_5)$ be the $x$-factorization of a permutation. Then
\begin{itemize}
\item
$x$ is a peak of $\sigma$ iff $w_2=\epsilon$ and $w_4=\epsilon$,
\item
$x$ is a valley of $\sigma$ iff $w_2\neq\epsilon$ and $w_4\neq\epsilon$,
\item
$x$ is a double ascent of $\sigma$ iff $w_2=\epsilon$ and $w_4\neq\epsilon$,
\item
$x$ is a double descent of $\sigma$ iff $w_2\neq\epsilon$ and $w_4=\epsilon$.
\end{itemize}
\end{prop}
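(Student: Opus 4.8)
The plan is to read off the two letters adjacent to $x$ directly from the $x$-factorization and then compare them with $x$ using the three defining properties (i)--(iii). Write $\sigma = x_1\ldots x_n$ and put $x = x_i$ with $1<i<n$, so that both the prefix $x_1\ldots x_{i-1}$ and the suffix $x_{i+1}\ldots x_n$ are nonempty. By the very definition of the $x$-factorization, $w_1w_2 = x_1\ldots x_{i-1}$ and $w_4w_5 = x_{i+1}\ldots x_n$. Hence the left neighbour $x_{i-1}$ of $x$ is the last letter of $w_2$ when $w_2\neq\epsilon$, and the last letter of $w_1$ (which is then necessarily nonempty, $x$ being an interior letter) when $w_2=\epsilon$; symmetrically, the right neighbour $x_{i+1}$ is the first letter of $w_4$ when $w_4\neq\epsilon$, and the first letter of $w_5$ (necessarily nonempty) when $w_4=\epsilon$.

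Next I would establish the two basic equivalences. If $w_2\neq\epsilon$, then by property (ii) every letter of $w_2$ exceeds $x$, so $x_{i-1}>x_i$; if instead $w_2=\epsilon$, then by property (i) the last letter of $w_1$ is less than $x$, so $x_{i-1}<x_i$. Thus ``$w_2=\epsilon$'' is equivalent to ``$x$ is preceded by an ascent'' and ``$w_2\neq\epsilon$'' to ``$x$ is preceded by a descent''. Dually, applying property (ii) to $w_4$ and property (iii) to $w_5$ shows that ``$w_4=\epsilon$'' is equivalent to ``$x$ is followed by a descent'' and ``$w_4\neq\epsilon$'' to ``$x$ is followed by an ascent''.

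Combining the two equivalences yields all four assertions simultaneously: $x$ is a peak (ascent on the left, descent on the right) iff $w_2=\epsilon$ and $w_4=\epsilon$; a valley (descent then ascent) iff $w_2\neq\epsilon$ and $w_4\neq\epsilon$; a double ascent iff $w_2=\epsilon$ and $w_4\neq\epsilon$; and a double descent iff $w_2\neq\epsilon$ and $w_4=\epsilon$. This is a routine case check with no real obstacle; the only point that deserves a word of care is exactly the one noted above, namely that emptiness of $w_2$ (resp. $w_4$) forces $w_1$ (resp. $w_5$) to be nonempty because $x$ is not the first or last letter of $\sigma$, so that properties (i) and (iii) genuinely apply.
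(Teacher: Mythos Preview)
Your proof is correct and is exactly the routine verification the paper has in mind: the paper does not supply a proof at all, writing only ``It is not difficult to check the following facts about $x$-factorization (see \cite{FS76})'' before stating the proposition. Your argument is the natural one-paragraph check the authors leave implicit.
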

We can  charactrize Andr\'e permutations in terms of $x$-factorization~\cite{FS73}.
\begin{prop}\label{x-fact-andre}
A permutation $\sigma\in\S_n$ is an Andr\'e permutation  
if it is empty or satisfies the following:
\begin{itemize}
\item[(i)]
$\sigma$ has no double descents,
\item[(ii)]
$n-1$ is not a descent position, i.e. $\sigma_{n-1}<\sigma_n$,
\item[(iii)]
If $\sigma_i$ is a valley of $\sigma$ with  $\sigma_i$-factorization 
$(w_1, w_2, \sigma_i, w_4, w_5)$, then  $\min(w_2)>\min(w_4)$, i.e.,
the minimum letter of $w_2$ is larger than the minimum letter of $w_4$.
\end{itemize}
\end{prop}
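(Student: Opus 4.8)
The plan is to prove the stated characterization as an ``if and only if'' for nonempty $\sigma$, treating the two implications separately. The direction ``Andr\'e $\Rightarrow$ (i)--(iii)'' is immediate for (i) and (ii), since $\sigma_{[n]}=\sigma$ must itself have no double descent and end with an ascent. For (iii) I would argue by contradiction: if a valley $\sigma_i$ has $\sigma_i$-factorization $(w_1,w_2,\sigma_i,w_4,w_5)$ with $a:=\min(w_2)<\min(w_4)$, pass to the subword $\sigma_{[a]}$. Since $\sigma_i<a$, every letter of $w_2$ except $a$ exceeds $a$, and every letter of $w_4$ exceeds $\min(w_4)>a$; so deleting from $\sigma$ all letters greater than $a$ reduces $w_2$ to the single letter $a$, removes $w_4$ entirely, and keeps $\sigma_i$. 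Hence $\sigma_{[a]}$ reads $u_1\,a\,\sigma_i\,u_5$, where $u_1,u_5$ are the surviving parts of $w_1,w_5$. Now $a>\sigma_i$, and by the definition of the $\sigma_i$-factorization the first letter of $w_5$, hence of $u_5$, is $<\sigma_i$ whenever it exists; so in $\sigma_{[a]}$ the entry $\sigma_i$ is preceded by a larger letter and is either followed by a smaller one or is the last letter, producing an interior double descent or a terminal descent --- a contradiction. Therefore $\min(w_2)>\min(w_4)$.

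For the converse, ``(i)--(iii) $\Rightarrow$ Andr\'e'', I would induct on $n$, the crux being: \emph{if $\sigma\in\S_n$ satisfies (i)--(iii) and $\sigma'$ denotes $\sigma$ with its largest letter $n$ deleted, then $\sigma'$ again satisfies (i)--(iii)}. Granting this, the inductive hypothesis makes $\sigma'\in\S_{n-1}$ an Andr\'e permutation; since $\sigma_{[k]}=\sigma'_{[k]}$ for all $k<n$ while $\sigma_{[n]}=\sigma$ has no double descent and ends with an ascent by (i) and (ii), $\sigma$ is Andr\'e. To prove the crux, note that $n$ is automatically a peak of $\sigma$, and by (ii) it occupies either the last position or some position $i\le n-2$. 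If $n$ is last, then $\sigma_{n-1}$ cannot be a valley of $\sigma$, since its $w_4$-block would then be exactly $(n)$ and (iii) would demand $\min(w_2)>n$; so $\sigma_{n-1}$ is a double ascent and deleting $n$ preserves (i) and (ii). If $n=\sigma_i$ with $2\le i\le n-2$, deleting $n$ makes $\sigma_{i-1}$ and $\sigma_{i+1}$ adjacent; here $\sigma_{i+1}$ is a valley of $\sigma$ (it follows $n$ and $\sigma$ has no double descent), so it cannot turn into a double descent, and the same $(n)$-block argument applied to $\sigma_{i-1}$ (whose $w_4$-block would be $(n)$ if $\sigma_{i-1}>\sigma_{i+1}$) shows it cannot either; (ii) is untouched because the last two letters of $\sigma$ are unchanged, and the boundary case $i=1$ is immediate.

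Finally I would transfer (iii) to $\sigma'$. Since $n$ is the maximal letter, it can never be the (necessarily small) last letter of a $w_1$-block nor the first letter of a $w_5$-block, and if it were the only letter of a $w_2$- or $w_4$-block the associated valley would cease to be a valley in $\sigma'$ and hence impose no constraint there. Consequently each valley of $\sigma'$ inherits its $x$-factorization from $\sigma$ by deleting $n$ from the block that contained it, which changes neither $\min(w_2)$ nor $\min(w_4)$, so (iii) for $\sigma'$ follows from (iii) for $\sigma$ and the induction closes. I expect the converse implication --- and within it the verification that erasing $n$ creates no fresh double descent --- to be the only genuinely delicate part; each time, the obstruction dissolves via the observation that a valley lying immediately below $n$ would be forced by (iii) to satisfy $\min(w_2)>\min(w_4)=n$, which cannot hold, and everything else is routine bookkeeping on the five blocks of the $x$-factorization.
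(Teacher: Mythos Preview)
Your argument is correct in both directions, and in fact you prove more than the paper does: the proposition is stated (and proved) in the paper only as an ``if'', whereas you establish the full equivalence. Your ``only if'' argument --- pass to $\sigma_{[a]}$ with $a=\min(w_2)$ and exhibit a double descent or terminal descent at the surviving valley $\sigma_i$ --- is clean and correct.

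For the ``if'' direction, however, your route differs from the paper's. The paper argues \emph{directly}: fixing any $k$, it shows that a would-be double descent $x=\tau_i$ in $\tau=\sigma_{[k]}$ must be a valley of $\sigma$, and that its $x$-factorization then satisfies $\min(w_2)\le\tau_{i-1}\le k<\min(w_4)$, contradicting (iii); the terminal-descent case is handled the same way. This is essentially the contrapositive of your ``only if'' reasoning, applied uniformly over all $k$. By contrast, you prove the ``if'' direction by induction on $n$: delete the largest letter $n$, verify that (i)--(iii) persist (using (iii) to rule out the one dangerous configuration, a valley whose $w_4$-block is the singleton $(n)$), and invoke the inductive hypothesis. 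Your bookkeeping on how the five blocks of each $x$-factorization behave under deletion of $n$ is accurate; in particular your observation that $n$ can never be the last letter of a $w_1$-block or the first letter of a $w_5$-block, and that $w_4=(n)$ is excluded by (iii), disposes of all boundary cases. The paper's approach is shorter and avoids the case analysis; your inductive approach is longer but has the virtue of being self-contained and of yielding the converse at no extra cost.
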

\begin{proof}
For $\sigma\in\S_n$ satisfying  (i)--(iii),
let $\tau:=\sigma_{[k]}=\tau_1\ldots \tau_k$ for $1\leq k< n$.
\begin{itemize}
\item If $x:=\tau_i$ is  a double descent of $\tau$, then $x$ must be a valley in $\sigma$ by (i);
as all letters (if any)
 between $\tau_{i-1}$ and $x$ in $\sigma$  are larger than $x$, and all the letters between $x$ and $\tau_{i+1}$ are larger than $k$, the $x$-factorization of $\sigma$  is $(w_1,w_2,x,w_4,w_5)$ with $\min(w_4)>k$ and $\min(w_2)\leq\tau_{i-1}\leq k$, so $\min(w_4)>\min(w_2)$ which is impossible  by (iii).
\item  If $\tau_{k-1}$ is a descent in $\sigma_{[k]}$, then $\tau_{k}$ must be a valley in $\sigma$ because all the letters after $x$ in $\sigma$ are larger than $k$.
Let $x=\tau_k$, then the same argument as above leads to a  contradiction. 
\end{itemize}
Thus, 
 for all $1\leq k\leq n$, the restriction $\sigma_{[k]}$ is an Andr\'e permutation.
\end{proof}

We recall Br\"and\'en's modified Foata-Strehl action or MFS-action~\cite{Bra08}.
For   $x\in[n]$,  the MFS-action $\tilde{\varphi}_x$ on $\sigma\in \S_n$ with  
 $x$-factorization $(w_1,w_2,x,w_4,w_5)$ is defined as follows:
\[\tilde{\varphi}_x(\sigma)=
\left\{
\begin{array}{cl}
w_1w_4xw_2w_5 & \mbox{if $x$ is a double ascent or double descent,}\\
\sigma & \mbox{if $x$ is a valley or  peak.}
\end{array}\right.
\]
Thus $\tilde{\varphi}_x$ is an involution acting on $\S_n$ and it is not hard to see that $\tilde{\varphi}_x$ and $\tilde{\varphi}_y$ commute for all $x,y\in[n]$. Hence for any subset $X\subseteq [n]$ we may define the action 
$\tilde{\varphi}_X$ on $\sigma\in \S_n$ by
$$
\tilde{\varphi}_X(\sigma)=\prod_{x\in X}\tilde{\varphi}_x(\sigma).
$$
See Figure~\ref{valhop} for illustration, where  exchanging
$w_2$ and  $w_4$ in the $x$-factorisation is equivalent to
moving $x$  from a double ascent to  a double descent or vice versa. 

For $\sigma\in\S_n$ let $\mathsf{Orb}(\sigma)=\{g(\sigma):g\in\Z_2^n\}$ be the orbit of $\sigma$ under the action MFS-action. Let $\tilde{\sigma}$ be 
the  unique element in $\mathsf{Orb}(\sigma)$ without  double descents. The next theorem follows from the work of \cite{Bra08, FS76}.
%%%%%%%%%%%%%%%%%%

\begin{figure}[t]
\setlength {\unitlength} {0.8mm}
\begin{picture} (90,40) \setlength {\unitlength} {1mm}
\thinlines
\put(24,8){\dashline{1}(1,0)(40,0)}%3
\put(64,8){\vector(1,0){0.1}}
\put(-22,14){\dashline{1}(-1,0)(32,0)}%5
\put(10,14){\vector(1,0){0.1}}
\put(6,18){\dashline{1}(-1,0)(-25,0)}%6
\put(-20,18){\vector(-1,0){0.1}}
\put(60,12){\dashline{1}(-1,0)(-30,0)}%4
\put(28,12){\vector(-1,0){0.1}}
\put(-35.5, 2.5){\circle*{1.3}}\put(-36.5, -1.5){$0$}
\put(-7,31){\line(-1,-1){28}}
\put(-7,31){\circle*{1.3}}
\put(20,4){\line(-1,1){27}}
\put(-7,32){$9$}
\put(-24,14){\circle*{1.3}}\put(-23,10){$5$}
\put(6,18){\circle*{1.3}}\put(9,16){$6$}
\put(20,4){\circle*{1.3}}
\put(20,4){\circle*{1.3}}\put(19.1,0){$1$}
\put(20,4){\line(1,1){24}}\put(44,28.5){\circle*{1.3}}
\put(24,8){\circle*{1.3}}\put(24.5,4.5){$3$}
\put(44,28.5){\line(1,-1){24}}\put(43,30){$8$}%8
\put(60.5,12){\circle*{1.3}}\put(62.5,11){$4$}
\put(68,4.5){\circle*{1.3}}\put(67.5,0){$2$}%2
\put(67.5,4.5){\line(1,1){16.5}}
\put(84,20.5){\circle*{1.3}}\put(83,22){$7$}%7
\put(84,21){\line(1,-1){17.5}}
\put(102, 2.5){\circle*{1.3}}\put(101,-1.5){$0$}
\end{picture}
\caption{MFS-actions on $\sigma=5\,9\,6\,1\,3\,8\,4\,2\,7$ with $\sigma(0)=\sigma(10)=0$ \label{valhop}}
\end {figure}

%Note that the boundary condition does matter. Take the permutation $596137428$ in Figure~\ref{valhop} as an example.
%If $\pi(0)=0$, then $5$ is a double ascent and will be moved by $\varphi'_5$.
%If $\pi(0)=10$ instead, then $5$ becomes a valley and will be fixed by $\varphi'_5$.%%%%%%%%%%%%%%%%
\begin{thm}\label{c1}
For any $\tilde{\sigma}\in\S_n$ without double decent, we have
$$
\sum_{\sigma\in \mathsf{Orb}(\tilde{\sigma})}p^{\res\sigma}q^{\312\sigma}
t^{\des\sigma}=p^{\res\tilde{\sigma}}
q^{\312\tilde{\sigma}}t^{\des\tilde{\sigma}}(1+t)^{n-1-2\des\tilde{\sigma}}.
$$
\end{thm}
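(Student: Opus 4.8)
The plan is to analyze how the three statistics $\res$, $\312$ and $\des$ change along a single orbit $\mathsf{Orb}(\tilde\sigma)$ under the MFS-action, starting from the distinguished element $\tilde\sigma$ that has no double descents. First I would record the structural picture: each letter $x\in[n]$ that is a double ascent or double descent of some $\tau\in\mathsf{Orb}(\tilde\sigma)$ is \emph{mobile} --- applying $\tilde\varphi_x$ toggles it between being a double ascent and a double descent --- while peaks and valleys are fixed by every $\tilde\varphi_x$. Since the $\tilde\varphi_x$ commute and are involutions, the orbit is a Boolean lattice $\Z_2^M$, where $M$ is the number of mobile letters. By Proposition~\ref{x-fact-andre}'s companion (the peak/valley characterization via $x$-factorization), the valleys and peaks of elements of the orbit are exactly the valleys and peaks of $\tilde\sigma$, so every letter that is not a valley or a peak of $\tilde\sigma$ is mobile, whereas in $\tilde\sigma$ itself such a letter is a double ascent (as $\tilde\sigma$ has no double descents). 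Counting with $\peak\tilde\sigma=\valley\tilde\sigma+1$ and $\des\tilde\sigma=\valley\tilde\sigma+\dd\tilde\sigma=\valley\tilde\sigma$, one gets $M = n - \peak\tilde\sigma - \valley\tilde\sigma = n-1-2\des\tilde\sigma$.

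Next I would check the effect of one move $\tilde\varphi_x$ on the triple of statistics when $x$ is mobile. Turning a double ascent into a double descent creates exactly one new descent position, so $\des$ goes up by one; reversing the move decreases $\des$ by one. This already explains the factor $(1+t)^{n-1-2\des\tilde\sigma}$ in the $t$-variable, since from $\tilde\sigma$ (with $\des\tilde\sigma$ descents, all mobile letters being double ascents) we may independently flip any subset of the $M$ mobile letters, each flip adding one to $\des$. The real work is the $(p,q)$-part: I must show that $\res\sigma + \312\sigma$ is \emph{constant} on the orbit, equal to $\res\tilde\sigma + \312\tilde\sigma$, and more precisely that the pair $(\res,\312)$ itself is constant. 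The key is the $x$-factorization $(w_1,w_2,x,w_4,w_5)$: the move swaps $w_2$ and $w_4$, both of which consist entirely of letters greater than $x$. I would argue that a pattern of type $\les$ or $\res$ — each of which is a descent-bottom $\sigma_i$ together with a larger "witness" $\sigma_j$ with the appropriate relative order — is unaffected by transposing two blocks of letters all exceeding $x$ around the fixed letter $x$. One has to treat carefully the occurrences in which $x$ itself plays the role of the middle (small) entry of a pattern: before the move $x$ is a double ascent $\sigma_{i-1}<x<\sigma_{i+1}$ (where now $\sigma_{i-1}=\last(w_1)$ if $w_2=\epsilon$, or the relevant neighbour), and after the move $x$ is a double descent, and one checks directly that the number of $(31\text{-}2)$ and $(2\text{-}13)$ patterns with $x$ as the small entry is the same in both configurations — this uses $(i)$, $(iii)$ of the $x$-factorization definition (last letter of $w_1$ below $x$, first letter of $w_5$ below $x$) so that $w_1$ contributes the same pattern count whether $w_2w_4$ or $w_4w_2$ follows.

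The main obstacle, and the step I would spend most care on, is precisely this pattern-invariance bookkeeping: one must verify that \emph{no} $\les$- or $\res$-pattern is created or destroyed by the block swap, across all five regions $w_1,w_2,x,w_4,w_5$ and for every choice of which region supplies the "bottom" descent letter and which supplies the larger witness. I expect the cleanest route is to fix a pair of positions and case on which of the two swapped blocks (if either) contains each member of the pair; pairs internal to $w_2$ or internal to $w_4$ clearly survive, pairs straddling $w_1$ and $w_2\cup w_4$ survive because $w_1$'s last letter is $<x<$ everything in $w_2\cup w_4$, and the only subtle cases involve the letter $x$ and the interface with $w_5$, handled as above. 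Once $(\res,\312)$ is shown constant on the orbit and $\des$ ranges over $\{\des\tilde\sigma,\dots,\des\tilde\sigma+M\}$ with the expected binomial multiplicities, summing $p^{\res\tilde\sigma}q^{\312\tilde\sigma}\sum_{j}\binom{M}{j}t^{\des\tilde\sigma+j} = p^{\res\tilde\sigma}q^{\312\tilde\sigma}t^{\des\tilde\sigma}(1+t)^{M}$ with $M=n-1-2\des\tilde\sigma$ gives the claim. I would also note that this is essentially the statement already implicit in \cite{Bra08} together with \cite{FS76}, so the write-up can be brief, citing those for the pattern-invariance and giving the orbit/counting argument in full.
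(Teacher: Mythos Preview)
Your plan is correct and matches the paper's: the orbit is $\Z_2^M$ with $M=n-1-2\des\tilde\sigma$, each flip of a mobile letter changes $\des$ by exactly one, and the theorem reduces to showing that $\res$ and $\les$ are invariant under every $\tilde\varphi_x$.

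The only difference is in how that invariance is argued. You propose a region-by-region case analysis on the $x$-factorization $(w_1,w_2,x,w_4,w_5)$; this works (and is lighter than you fear, since for a mobile $x$ one of $w_2,w_4$ is empty, so the ``swap'' only moves $x$ across a single block and preserves the relative order of every other pair of letters), but the paper uses a shortcut that avoids it entirely. Namely, for a fixed letter $a=\sigma_j$ the number of $(2\mhyphen13)$-patterns with $a$ playing the role of the ``2'' equals the number of valley--peak pairs $(v,p)$ bounding a maximal ascending run strictly to the right of $j$ with $v<a<p$, because each such run contains exactly one ascent crossing the level $a$. Now $\tilde\varphi_x$ fixes all peaks and valleys and, as just noted, preserves the relative order of any two letters different from $x$; when $a=x$ the relevant valleys have value $v<x$ and therefore lie in $w_1\cup w_5$, whose positions relative to $x$ do not change. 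Hence $\res$ is invariant in one stroke, and the $(31\mhyphen2)$ case is symmetric. This reformulation is what makes the paper's proof two sentences long; your direct bookkeeping would reach the same conclusion at greater cost.
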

%%%%%%%%%%%
\begin{proof}
For $\sigma\in\S_n$, consider a $\thot$ triple $(\sigma_j,\sigma_i,\sigma_{i+1})$, where $1\leq j<i\leq n$ and $\sigma_i<\sigma_j<\sigma_{i+1}$, where $(\sigma_i,\sigma_{i+1})$ is a pair of consecutive valley and peak, which means that there are no other peaks and valleys in between $\sigma_i$ and $\sigma_{i+1}$. The number of such triples is invariant under the action since $\sigma_i$ and
$\sigma_{i+1}$ cannot move and $a_j$ can not move over the peak $\sigma_{i+1}$. A similar reasoning applies to $\toht$.
\end{proof}
%%%%%%%%%%%
Let   $A_n(p,q,t,u,v,w)$  be   the generalized   Eulerian polynomials defined by
\begin{align}\label{eq:dfA}
A_n(p,q,t,u,v,w) := \sum_{\sigma\in \S_n} p^{\res \sigma} q^{\les \sigma}  t^{\des\sigma}u^{\da\sigma} v^{\dd\sigma}w^{\valley\sigma}.
\end{align}
As $\des=\valley+\dd$ we derive the 
following generalization of  \eqref{gamma-pq} from Theorem~\ref{c1}.
This was first proved in \cite{SZ12} by using combinatorial theory  of 
continued fractions.  
\begin{cor}\label{coro: gamma} For the $\gamma$-coefficients $\gamma_{n,k}(p,q)$ in 
\eqref{eq:combin} we have 
\begin{equation}\label{eq:gamma}
A_n(p,q,t,u,v,w)= \sum_{k=0}^{\floor{(n-1)/2}} \gamma_{n,k}(p,q) (tw)^k (u+vt)^{n-1-2k}.
\end{equation}
%where $\gamma_{n,k}(p,q)
%=\sum_{\sigma\in \mathcal{G}_{n,k}}p^{\thot\sigma}q^{\toht\sigma}$ (see \eqref{eq:combin}).
\end{cor}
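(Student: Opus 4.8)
The plan is to deduce Corollary~\ref{coro: gamma} from Theorem~\ref{c1} by partitioning $\S_n$ into MFS-orbits and refining the statistic count on each orbit. First I would recall that the MFS-action $\tilde\varphi_X$ only moves letters between the ``double ascent'' and ``double descent'' roles, never touching peaks or valleys; hence $\valley\sigma$ (and therefore $\peak\sigma=\valley\sigma+1$) is constant on each orbit $\mathsf{Orb}(\tilde\sigma)$, equal to $\valley\tilde\sigma=\des\tilde\sigma$ since $\tilde\sigma$ has no double descents (so $\des\tilde\sigma=\valley\tilde\sigma+\dd\tilde\sigma=\valley\tilde\sigma$). Next I would observe that, as $X$ ranges over subsets of the double-ascent/double-descent positions of $\tilde\sigma$, the pair $(\da\sigma,\dd\sigma)$ runs over all pairs with $\da\sigma+\dd\sigma=n-1-2\des\tilde\sigma$, each exactly once: flipping a letter from double ascent to double descent decreases $\da$ by one and increases $\dd$ by one. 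Finally, $\res$ and $\les$ behave exactly as in the proof of Theorem~\ref{c1}: the argument there shows the weight $p^{\res\sigma}q^{\les\sigma}$ is constant on the orbit (the relevant $\thot$ and $\toht$ triples are orbit-invariant). Wait — I should double-check: Theorem~\ref{c1} is stated with $q^{\312\sigma}$, i.e.\ $q^{\les\sigma}$, and asserts invariance of $\res$ and $\les$ on the orbit, so indeed $p^{\res\sigma}q^{\les\sigma}=p^{\res\tilde\sigma}q^{\les\tilde\sigma}$ throughout $\mathsf{Orb}(\tilde\sigma)$.

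Putting these together, I would compute the orbit sum with the full six-variable weight:
\begin{align*}
\sum_{\sigma\in\mathsf{Orb}(\tilde\sigma)} p^{\res\sigma}q^{\les\sigma}t^{\des\sigma}u^{\da\sigma}v^{\dd\sigma}w^{\valley\sigma}
&= p^{\res\tilde\sigma}q^{\les\tilde\sigma}(tw)^{\des\tilde\sigma}\sum_{\substack{a+b=n-1-2\des\tilde\sigma}} u^{a}v^{b}t^{b}\\
&= p^{\res\tilde\sigma}q^{\les\tilde\sigma}(tw)^{\des\tilde\sigma}(u+vt)^{n-1-2\des\tilde\sigma},
\end{align*}
where I used $\des\sigma=\valley\sigma+\dd\sigma=\des\tilde\sigma+\dd\sigma$ on the orbit to split $t^{\des\sigma}=t^{\des\tilde\sigma}t^{\dd\sigma}$ and then absorb $t^{\dd\sigma}=t^{b}$ into the binomial expansion, and $w^{\valley\sigma}=w^{\des\tilde\sigma}$. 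Summing over a set of orbit representatives — which are precisely the permutations without double descents, i.e.\ $\bigcup_k\mathcal{G}_{n,k}$ — and recalling $\gamma_{n,k}(p,q)=\sum_{\sigma\in\mathcal{G}_{n,k}}p^{\res\sigma}q^{\les\sigma}$ from \eqref{eq:combin}, I get
$$
A_n(p,q,t,u,v,w)=\sum_{k=0}^{\lfloor(n-1)/2\rfloor}\Bigl(\sum_{\tilde\sigma\in\mathcal{G}_{n,k}}p^{\res\tilde\sigma}q^{\les\tilde\sigma}\Bigr)(tw)^k(u+vt)^{n-1-2k}=\sum_{k=0}^{\lfloor(n-1)/2\rfloor}\gamma_{n,k}(p,q)(tw)^k(u+vt)^{n-1-2k},
$$
which is \eqref{eq:gamma}.

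The only genuinely delicate point — the ``hard part'' — is the bookkeeping that on a fixed orbit the multiset of pairs $(\da\sigma,\dd\sigma)$ is exactly $\{(a,b):a+b=n-1-2\des\tilde\sigma\}$ with each pair attained exactly once by the right number of permutations; but this is immediate from the fact that MFS-action is a free $\Z_2$-action on the set of double-ascent/double-descent positions of $\tilde\sigma$ (there are $n-1-2\des\tilde\sigma$ of them, since the remaining $2\des\tilde\sigma$ non-boundary positions are the $\des\tilde\sigma$ valleys and $\des\tilde\sigma$ non-terminal peaks — using $\peak\tilde\sigma=\valley\tilde\sigma+1$ and that one peak may be terminal, which a careful count via $\sigma_0=\sigma_{n+1}=0$ resolves), and toggling a subset $X$ moves exactly $|X|$ of them from ascent to descent. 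Everything else is a direct substitution using statistics already shown invariant in Theorem~\ref{c1}, together with the identity $\des=\valley+\dd$ noted after the peak/valley definitions.
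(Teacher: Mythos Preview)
Your argument is correct and follows exactly the paper's approach: the paper derives the corollary from Theorem~\ref{c1} via the MFS orbit decomposition together with $\des=\valley+\dd$, and you have simply written out those details. One small slip: in your displayed orbit sum, $\sum_{a+b=m} u^{a}v^{b}t^{b}$ is missing the binomial coefficients $\binom{m}{b}$ needed to obtain $(u+vt)^{m}$; your later remark about the free $\Z_2^{m}$-action makes clear you understand this, so it is a typo rather than a gap.
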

 %%%%%%%%%%
\subsection{New action on permutations without double descent}
Let $\DD_n$ be the set of permutations of $[n]$ without double descent.
For any permutation $\sigma\in \DD_n$ and $x\in [n]$ we shall  identify 
 $\sigma$ with its $x$-factorization, i.e., $\sigma=(w_1, w_2, x, w_4, w_5)=w_1\, w_2\, x\, w_4\, w_5$, and 
 let  $y_1:=\min(w_2),\, y_2:=\min(w_4)$.
 A valley  $x$ of $\sigma$ is said to be
\begin{itemize}
\item \emph{good} (resp. \emph{bad})  if $y_1>y_2$ (resp. $y_1<y_2$); 
\item of   \emph{type I} 
if $\min(y_1,y_2)$ is a peak or double ascent,
\item of   \emph{type II} if $\min(y_1,y_2)$ is a valley.
\end{itemize}
We  denote by $\mathsf{Val}\,\sigma$ the set of valleys of 
$\sigma$.
\begin{prop}
Let  $\sigma\in \DD_n$ and $x\in \mathsf{Val}\,\sigma$ with $y=\min(y_1,y_2)$. 
\begin{itemize}
\item[(i)]
If  $y$ is a peak, then $w_4=y$ (resp. $w_2=y$) if  $y_1>y_2$ (resp. $y_1<y_2$).
\item[(ii)]
If  $y$ is a double ascent,  then $w_4=yw_4''$  (resp. $w_2=yw_2''$) 
with  $w_2'', w_4''\neq \epsilon$ if  $y_1>y_2$ (resp. $y_1<y_2$).
\item[(iii)]
If $y$ is a valley, then  $w_4=w_4'yw_4''$ (resp. $w_2=w_2'yw_2''$) with $w_2', w_2'', w_4', w_4''\neq \epsilon$ if  $y_1>y_2$ (resp. $y_1<y_2$).
\end{itemize}
\end{prop}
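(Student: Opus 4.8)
The plan is a direct local analysis of where the letter $y$ sits inside the block ($w_4$ or $w_2$) whose minimum it is. By the symmetry between $w_2$ and $w_4$ in the $x$-factorization — $w_2$ is delimited on its left by either the (sub-$x$) last letter of $w_1$ or by $\sigma_0=0$ and on its right by $x$, while $w_4$ is delimited on its left by $x$ and on its right by either the (sub-$x$) first letter of $w_5$ or by $\sigma_{n+1}=0$ — I would treat only the case $y_1>y_2$, so $y=y_2=\min(w_4)$, the case $y_1<y_2$ being the mirror argument (and producing the parenthetical statements). Since $x$ is a valley, $w_4\neq\epsilon$; write $w_4=a_1\cdots a_m$ with $m\geq 1$. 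I would record two facts used throughout: every letter of $w_4$ exceeds $x$ (in particular $x<y$), and $y$ is the \emph{strict} minimum of $w_4$, so each $a_i\neq y$ satisfies $a_i>y$.

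Next I would set $y=a_j$ and examine the two neighbours of $y$ in $\sigma$. The letter immediately to the left of $a_1$ in $\sigma$ is $x$, and the letter immediately to the right of $a_m$ is either the first letter of $w_5$ or $\sigma_{n+1}=0$; in all cases it is $<x<y$. Running through the four positions of $j$: if $j=1=m$ then both neighbours of $y$ are $<y$, so $y$ is a peak and $w_4=y$; if $j=1<m$ the neighbours are $x<y$ and $a_2>y$, so $y$ is a double ascent and $w_4=yw_4''$ with $w_4''=a_2\cdots a_m\neq\epsilon$; if $1<j<m$ the neighbours are $a_{j-1}>y$ and $a_{j+1}>y$, so $y$ is a valley and $w_4=w_4'yw_4''$ with $w_4'=a_1\cdots a_{j-1}\neq\epsilon$ and $w_4''=a_{j+1}\cdots a_m\neq\epsilon$; finally if $1<j=m$ the neighbours are $a_{j-1}>y$ and a letter $<y$, which would make $y$ a double descent. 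This last case is exactly where the hypothesis $\sigma\in\DD_n$ is invoked to exclude it, and it is the only non-routine point in the argument; everything else is bookkeeping about the letters flanking $w_4$.

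Finally, since the three admissible cases $j=1=m$, $j=1<m$, $1<j<m$ are mutually exclusive, exhaustive (once $1<j=m$ is ruled out), and correspond respectively to $y$ being a peak, a double ascent, and a valley, the reverse implications are immediate and yield (i), (ii), (iii). The mirror computation for $w_2$ (flanked by a sub-$x$ letter or $\sigma_0=0$ on the left and by $x$ on the right) is literally the same and gives the parenthetical assertions $w_2=y$, $w_2=yw_2''$, $w_2=w_2'yw_2''$. I do not expect any genuine obstacle here; the only things to watch are the boundary cases ($w_5=\epsilon$, $m=1$) and keeping straight which flanking letter equals $x$ and which is smaller than $x$.
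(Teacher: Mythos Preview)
Your proof is correct and follows essentially the same approach as the paper: a direct local analysis of where $y$ sits inside its block, treating only the case $y=y_2$ and invoking symmetry for the other. The only organizational difference is that you argue by position of $y$ within $w_4$ and then read off its type, whereas the paper starts from the type and reads off the position; your version is slightly more explicit about ruling out the double-descent case via the hypothesis $\sigma\in\DD_n$, which the paper leaves implicit.
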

\begin{proof}  We assume that $y=y_2$.
\begin{itemize}
\item[(i)] If $y_2$ is a peak, then $w_4=y_2$ for, otherwise,  the word $w_4$ contains a letter next to $y_2$   and smaller than $y_2$.
\item[(ii)] If $y_2$ is a double ascent,  then  $w_4=yw_4'$  with  $w_4'\neq \epsilon$.
\item[(iii)] If $y_2$ is a valley, then  $w_4=w_4'yw_4''$ with $w_4', w_4''\neq \epsilon$.
\end{itemize}
The case $y=y_1$ can be proved similarly.
\end{proof}

\begin{defn}\label{def:action}
For $\sigma\in \DD_n$ and each $x\in \mathsf{Val}\,\sigma$ with $y=\min(y_1,y_2)$, we define its
 transform $\varphi(\sigma, x)$ as follows:
\begin{itemize}
\item[(i)]
If $y$ is a peak, then 
$$
\varphi(\sigma, x)=\begin{cases}
(w_1,y,x,w_2,w_5)& \textrm{if}\quad y=y_2,\\
(w_1,w_4,x, y,w_5)&\textrm{if}\quad y=y_1.
\end{cases}
$$
%$w_4=y_2$ for, otherwise,  the word $w_4$ contains a letter next to $y_2$   and smaller than $y_2$. %then the $\sigma_i$-factorization of $\sigma$ is $(w_1,w_2,\sigma_i,\sigma_{i+1},w_5)$. 
%Let  $\varphi(x,\sigma)=(w_1,y_2,x,w_2,w_5)$.
%%%%%%%%%%%%%%%%
\item[(ii)]
If $y$ is a double ascent,  then  
$$
\varphi(\sigma, x)=\begin{cases}
(w_1,yw_2,x,w'',w_5)& \textrm{if}\quad y=y_2\; \text{and} \; w_4=yw'',\\
(w_1,w'',x, yw_4,w_5)&\textrm{if}\quad y=y_1\; \text{and} \; w_2=yw''.
\end{cases}
$$
%
%$w_4=yw_4'$  with  $w_4'\neq \epsilon$.
% Let $\varphi(x,\sigma)=(w_1,y_2w_2,x,w_4',w_5)$.
%%%%%%%%%%%%%%%%%%%
\item[(iii)]
If $y$ is a valley, then   
$$
\varphi(\sigma, x)=\begin{cases}
(w_1,w_2yw',x,w'', w_5)& \textrm{if}\quad y=y_2\; \text{and} \; w_4=w'yw'',\\
(w_1,w',x, w''yw_4,w_5)&\textrm{if}\quad y=y_1\; \text{and} \; w_2=w'yw''
\end{cases}
$$
with $w', w''\neq \epsilon$.
%$\varphi(x,\sigma)=(w_1,w_2y_2w_4',x,w_4'',w_5)$.
%If $y$ is a valley, then we  just exchange $\sigma_i$ and $y$, and this transformation eliminates the $(31-2)$ pattern 
%$(\sigma_{i-1}, \sigma_i, y)$ and creates a new $(2-13)$ pattern $(y, \sigma_i, y')$, where $y'$ is the letter next to the right  of $y$ in $\sigma$.
\end{itemize}
\end{defn}
Obviously this transformation switches $y$ from left  to right or  right to left of $x$
and  $\varphi(\varphi(\sigma, x),x)=\sigma$.
The three cases (i),(ii),(iii) of  the transformation with $y=y_2$ are  depicted  in  Figure~\ref{V}.
%Note that in case (iii), if 
%$x\in \mathsf{Val}^{(2)}(\sigma)$ and $y\in \mathsf{Val}^{(1)}(\sigma)$, 
%then $x\in \mathsf{Val}^{(1)}(\varphi(\sigma, x))$ and $y\in \mathsf{Val}^{(2)}(\varphi(\sigma, x))$.

We record the basic properties of this transformation in the following proposition.

%%%%%%%%%%%%%%%%%%%%
\begin{figure}\label{V}
\begin{tikzpicture}[scale=0.5]
%%%%%%%%%%%%%%%%%%%%%%%%%%%%
\draw (-17,2)--(-15,0)--(-13.5,1.5);
\node[below] at (-15,0) {$x$};
\fill (-15,0) circle (3pt);
\fill (-13.5,1.5) circle (3pt);
\draw (-13.5,1.5)--(-11.5,-0.5);
\fill (-17,2) circle (3pt);
\node[above] at (-13.5,1.5) {$y$};
\draw (-22,-0.5)--(-20,1.5);
\fill (-22,-0.5) circle (3pt);
\fill (-20,1.5) circle (3pt);
\draw[color=red] (-20,1.5) parabola bend (-19,2) (-18.5,1.75);
\draw[color=red] (-18.5,1.75) parabola bend (-18,1.5) (-17,2);
\node[above] at (-17,2) {$\tiny{\sigma_{i-1}}$};
%\draw[>=latex,->,dashed,line width=1pt] (1,1)--(-5.5,1);
\node[above] at (-18,1.5) {$w_2$};
\node[below] at (-17,-1) {$y$ is a peak};
%%%%%%%%%%%%%%%%%%%%%
\draw[>=latex,<->,dashed,line width=1pt] (-18,1.5) parabola bend (-15,-1) (-13.5,1.5);
\draw (-20,1.5) rectangle (-17,2);
%%%%%%%%%%%%%%%%%%%%%%%%%%%
\draw (-5,2)--(-3,0)--(-1.5,1.5);
\node[below] at (-3,0) {$x$};
\fill (-3,0) circle (3pt);
\fill (-2,1) circle (3pt);
\fill (-5,2) circle (3pt);
\node[above] at (-2,1) {$y$};
\draw (-10,-0.5)--(-8,1.5);
\fill (-10,-0.5) circle (3pt);
\fill (-8,1.5) circle (3pt);
\draw[color=red] (-8,1.5) parabola bend (-7,2) (-6.5,1.75);
\draw[color=red] (-6.5,1.75) parabola bend (-6,1.5) (-5,2);
\node[above] at (-5,2) {$\sigma_{i-1}$};
\draw[>=latex,<->,dashed,line width=1pt] (-2,1)--(-8.5,1);
\node[above] at (-6,1.5) {$w_2$};
\node[below] at (-5,-1) {$y$ is a double ascent};
%%%%%%%%%%%%%%%%%%%%%%%%%
\draw (0,2)--(2,0)--(3,1);
\node[below] at (2,0) {$x$};
\fill (2,0) circle (3pt);
\fill (6,0.5) circle (3pt);
\draw (5.5,1)--(6,0.5)--(7,1.5);
\node[below] at (6,0.5) {$y$};
\draw[color=red] (3,1) parabola bend (3.5,1.5) (4,0.75);
\draw[color=red] (4,0.75) parabola bend (4.5,0.6) (5.5,1);
\node[above] at (4,1.2) {$w_4'$};
\draw[>=latex,<->,dashed,line width=1pt] (2,0) cos (6,0.5);
\node[below] at (4,-1) {$y$ is a valley};
\end{tikzpicture}
\caption{The transform $\varphi(x,\sigma)$ according to the type of $y$}
%%%%%%%%%%%%%%%%%%%%
\end{figure}

%%%%%%%%%%%%%%%%%%%%%%%%%%%
\begin{prop}\label{Q2}
If  $\sigma\in\DD_{n,k}$ and $x\in \mathsf{Val}\,\sigma$,  then $\varphi(\sigma, x)\in \DD_{n,k}$ 
and 
\begin{align}\label{eq:123x}
\begin{split}
\res\varphi(\sigma, x)&=\begin{cases}
\res\sigma+1& \text{if $x$ is good} \\
\res\sigma-1& \text{if $x$ is bad};
\end{cases}\\
\les\varphi(\sigma, x)&=\begin{cases}
\les\sigma-1& \text{if $x$ is good}\\
\les\sigma+1& \text{if $x$ is bad}.
\end{cases}
\end{split}
\end{align}
\end{prop}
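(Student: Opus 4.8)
The plan is a case analysis over the six sub-cases of Definition~\ref{def:action}, organized by the following reduction. Fix $\sigma\in\DD_n$ with $\des\sigma=k$ and $x\in\mathsf{Val}\,\sigma$, write $\sigma=(w_1,w_2,x,w_4,w_5)$ for its $x$-factorization, and recall $y_1=\min(w_2)$, $y_2=\min(w_4)$ and $y=\min(y_1,y_2)$. Since $x$ is a valley, $w_2,w_4\ne\epsilon$ and every letter of $w_2\cup w_4$ exceeds $x$, so $x<y$; and — the fact that will drive the computation — \emph{every letter strictly between $x$ and $y$ lies in $w_1\cup w_5$}, because $w_2$ has no letter below $y_1\ (\ge y)$ and $w_4$ none below $y_2\ (\ge y)$. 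The six sub-cases fall into three mirror pairs according to whether $y=y_2$ (then $x$ is good) or $y=y_1$ (then $x$ is bad). As $\varphi(\cdot,x)$ is an involution exchanging good and bad valleys, it suffices to prove $\varphi(\sigma,x)\in\DD_{n,k}$ in general and, assuming $x$ good (so $y=y_2$, and $w_4$ equals $y$, $yw''$, or $w'yw''$ according as $y$ is a peak, a double ascent, or a valley), to establish $\res\varphi(\sigma,x)=\res\sigma+1$ and $\les\varphi(\sigma,x)=\les\sigma-1$; the case $x$ bad then follows by applying the good case to $\varphi(\sigma,x)$.

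For the membership $\varphi(\sigma,x)\in\DD_{n,k}$ I would, in each sub-case, write $\tau:=\varphi(\sigma,x)$ as a word beside $\sigma$ — for example, in the valley case, $\sigma=w_1\,w_2\,x\,w'\,y\,w''\,w_5$ while $\tau=w_1\,w_2\,y\,w'\,x\,w''\,w_5$, so that $w_1w_2$ is a common prefix and $w''w_5$ a common suffix — and observe that $\sigma$ and $\tau$ agree except at a bounded set of \emph{junctions} (gaps between consecutive letters), each incident to $x$, to $y$, or to a boundary letter of one of the blocks, while the interiors of the blocks, and hence their up/down structure, are untouched. Inspecting those finitely many junctions — using $x<y$, that $x$ lies below everything in $w_2\cup w_4$, and that $y$ lies below everything in $w_2$ and everything in $w_4$ other than $y$ — one checks that no double descent is created and that the number of descending junctions is the same in $\sigma$ and $\tau$, whence $\tau\in\DD_n$ and $\des\tau=k$. (This check is symmetric under interchanging $w_2$ and $w_4$, so only three sub-cases are really distinct; when $w_1$ or $w_5$ is empty one simply reads the missing neighbour as $\sigma_0=0$ or $\sigma_{n+1}=0$.)

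For the statistics I would use their local description: $\les\sigma$ is the sum, over pairs of consecutive letters $u,v$ of $\sigma$ (in this order) with $u>v$, of the number of letters lying to the right of the pair with value in the open interval $(v,u)$; dually, $\res\sigma$ sums over consecutive ascents, counting letters to the left of value strictly between the two. A consecutive pair at a junction that is \emph{not} affected contributes the same in $\sigma$ and in $\tau$: it survives with the same two letters, the block interiors are unchanged, and the relevant side of the junction is either a block interior, which does not change at all, or else changes only by the possible presence of $x$ and $y$ — and these cannot be the middle letter there, being the two smallest letters of $w_2\,x\,w_4$ while both letters of such a pair are at least $y$, so the interval they span avoids both $x$ and $y$. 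Therefore $\res\tau-\res\sigma$ and $\les\tau-\les\sigma$ are controlled entirely by the handful of affected junctions; computing the contribution of each and summing, the two facts "$x<y$" and "every letter of $(x,y)$ lies in $w_1\cup w_5$" cause all the terms to cancel except for a net $+1$ in $\res$ and a net $-1$ in $\les$, as required. (The case $x$ bad follows by the involution identity, which turns these equalities into $\res\varphi(\sigma,x)=\res\sigma-1$ and $\les\varphi(\sigma,x)=\les\sigma+1$.)

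The real work is the last tally: one must identify, in each of the three good cases, precisely which consecutive pairs are created and which are destroyed at the affected junctions — these involve $x$, $y$, the first and last letters of $w_1$, $w_2$ and $w_5$, and the sub-case-dependent internal shape of $w_4$ — and then count the middle letters on the correct side of each, checking that the sum telescopes to $\pm1$. Each individual comparison is elementary, but cases (i), (ii), (iii) have to be done separately. I expect case (iii), where $\varphi$ merely transposes $x$ and $y$ inside the word, to be the most transparent, so I would carry it out first and handle the peak and double-ascent cases in the same manner.
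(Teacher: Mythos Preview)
Your plan is sound and would succeed: the reduction to the good case via the involution is valid, the key fact that every letter with value in $(x,y)$ lies in $w_1\cup w_5$ is exactly what makes the tally close, and your treatment of the ``unaffected'' junctions (where the only possible change on the relevant side is the appearance or disappearance of $x$ or $y$, neither of which can land in the open interval because both endpoints of the pair are $\ge y$) is correct.  The paper's proof follows the same three-case structure and the same localisation idea, but organises the bookkeeping dually: instead of summing over consecutive pairs and tallying the affected junctions, it sums over the \emph{third letter} $a$ of a $\les$- or $\res$-pattern and argues that for every $a\ne y$ the number of patterns $a$ creates is unchanged, while $y$ itself trades exactly one $\les$-pattern $(\last(w_2),x,y)$ in $\sigma$ for one $\res$-pattern $(y,x,\first(\cdot))$ in $\tau$.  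That viewpoint pinpoints a single letter as the sole source of the $\pm 1$ and avoids the telescoping computation you anticipate; your junction-by-junction approach is more mechanical and requires the cancellation of terms like $K=|w_1\cap(x,y)|$ and $L=|w_5\cap(x,y)|$ across several junctions, but has the advantage of making explicit which consecutive pairs actually change and why the residual is exactly $\pm1$.  Either route is acceptable; if you carry yours out, do case~(iii) first as you suggest (there $\tau$ is literally $\sigma$ with $x$ and $y$ swapped, so only four junctions move), and in cases~(i)--(ii) remember that the block $w_2$ relocates, so junctions interior to $w_2$ must be checked against your ``only $x,y$ can enter or leave the relevant side'' criterion.
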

%%%%%%%%%%%%%%
\begin{proof} We assume that $y:=\min(y_1,y_2)=y_2$. By definition~\ref{def:action},  
the descent number is unchanged after the transform, i.e., 
$\des\sigma=\des\varphi(\sigma, x)$.  Next  we  verify that 
$\varphi(\sigma, x)$ has no double descent for the above three cases.
For (i), 
the only possible double descent in  $\varphi(\sigma, x)=(w_1,y,x,w_2,w_5)$ is  $\last(w_2)$, but this is impossible for, otherwise, the same letter $\last(w_2)$ would be a double descent in $\sigma$. The case (ii) is clear. For case (iii),  the only possible double descent in  $\varphi(\sigma, x)=(w_1,w_2yw_4',x,w_4'',w_5)$ is  $\last(w_4')$, but this is also impossible for, otherwise, the same letter $\last(w_4')$ would be a double descent in $\sigma$. Thus
$\varphi(\sigma, x)\in \DD_{n,k}$.

It remains to prove \eqref{eq:123x}. 
For a permutation $\sigma\in \S_n$,
it is convenient to say that  a $\312$ pattern $(\sigma_j,\sigma_{j+1},\sigma_i)$ is  created by $\sigma_i$, and  a $\res$ pattern $(\sigma_i,\sigma_j,\sigma_{j+1})$ is created by $\sigma_i$. Let $x\in \mathsf{Val}\sigma$
and $(w_1,w_2,x,w_4,w_5)$ be the $x$-factorization of $\sigma$
and $y:=\min(w_4)$. We examine three cases corresponding to the above (i)-(iii), respectively.
\begin{itemize}
\item[(i)]
If $y$ is a peak, then  $\tau=(w_1,y,x,w_2,w_5)$.  Let  $a\in [n]$.
  \begin{itemize}
 \item  If $a=y$ and creats  the $\312$-pattern $(\last(w_2),x,y)$ in $\sigma$, then $a$  creats 
 the  $\res$-pattern  $(y,x, \first(w_2))$ in $\tau$.
 \item If $a=x$ and creats the $\res$-pattern $(x, y, \first(w_5))$ in $\sigma$, then
 $a$ creats the same pattern $(x, \last(w_2), \first(w_5))$ in $\tau$.
 \item If $a\neq x,y$ and creats the $\res$-pattern $(a, \last(w_1), \first(w_2))$ in $\sigma$, then 
 $a$ creats the same pattern $(a, \last(w_1), y)$ in $\tau$.
\item If $a\neq x,y$ and creats the $\312$-pattern $(y,\first(w_5),a)$ in $\sigma$, then $a$ creats the same pattern
$(\last(w_2),\first(w_5),a)$ in $\tau$.
%for $\les$ patterns we only need to check those  created by $a\in w_5$, especially, if $(y,\first(w_5),a)$ is a $\les$ pattern then after the action,  
%$(\last(w_2),\first(w_5),a)$ is still a $\les$ pattern, and similarly, for $\res$ patterns 
%we only need to check the $\res$ patterns created by $a\in w_1$, if 
%$(a, \first(w_2),w_{2,2})$ is a $\res$ pattern then after the action, either $(a,\last(w_1),y)$ or $(a,\sigma_i,\first(w_2))$ is a $\res$ pattern.
\end{itemize}
Hence,  if $a\in[n]\setminus\{y\}$,  the number of $\les$-patterns (resp. $\res$-patterns)
  created by $a$ is  unchanged under the action.
%%%%%%%%%%%%%%%%%
\item[(ii)]
If $y$ is a double ascent, then 
$\tau=(w_1,yw_2,x,w_4',w_5)$ with  $w_4=yw_4'$ and $w_4' \neq \epsilon$.
If $y$ creats  the $\les$-pattern $(\last(w_2),x,y)$ in $\sigma$, then
$y$ creats the $\res$-pattern $(y,x,\first(w_4))$ in $\tau$. 
As in (i) we can verify that the number of patterns created by other entry $a\in [n]$  will  not change.
%Note that other $\les$-patterns $(\last(w_2),\sigma_i,z)$ remain intact, because $\last(w_2)$ and $\sigma_i$ do not change their relative position. It is clear that this action not change any $\les$-patterns created by $y\neq a\in[n]$, for $b$ a letter of $w_1$, the number of $\res$ created by $b$ do not change because $y$ is still in the left side of $b$ and it is also a double ascent after the action.
%%%%%%%%%%%%%%%%%%%%%%%
\item[(iii)]
If $y$ is a valley, then  $\tau=(w_1,w_2yw_4',x,w_4'',w_5)$ with $w_4=w'yw_4'$ and 
$w_4', w_4''\neq \epsilon$.
If $y$ creats the $\les$-pattern $(\last(w_2),x,y)$ in $\sigma$, then $y$ creats the $\res$-pattern $(y,x,\first(w_4^{''}))$ in $\tau$.
 Again as in case (i) for other entry $a\in [n]$, the number of $\les$ and $\res$ created by $a$ will not  change.
\end{itemize}
The proof is thus completed.
\end{proof}
%%%%%%%%%%%%%%%%%%%%

Next we define the transform $\varphi(\sigma, S)$ for  any subset $S$ of $\mathsf{Val}(\sigma)$ with
 $\sigma\in \DD_n$.
%%%%%%%%%%%%%%%%
\begin{defn} Let $\sigma\in \DD_n$.
For  any $S\subseteq \mathsf{Val}\,\sigma$, 
let  $\{S_1,S_2\}$ be the partition of $S$ such that  
\begin{itemize}
\item[(1)]
$S_1$ is the subset of $S$  consisting of valleys of type I, say $i_1, \ldots, i_l$;
%whose  corresponding $y$ is a peak or a double ascent;
\item[(2)] 
$S_2$  is  the subset of $S$ consisting of valleys of type II, say $j_k<\cdots< j_2<j_1$.
\end{itemize}
Define the transforms 
\begin{align*}
\varphi(\sigma,S_1)&=\varphi(i_l, \ldots, \varphi(i_2, \varphi(i_1, \sigma))),\\
\varphi(\sigma,S_2)&=\varphi(j_k, \ldots, \varphi(j_2, \varphi(j_1, \sigma))),\\
\varphi( \sigma,S)&=\varphi(\varphi(\sigma, S_1), S_2).
\end{align*}
\end{defn}
%%%%%%
\begin{rem}
 The  image $\varphi(\sigma, S_1)$ is independent of the order  of $i_1, \ldots, i_l$ while 
 $\varphi(\sigma, S_2)$ is defined for the elements of $S_2$ in the decreasing order  $j_1>j_2> \ldots >j_1$.
\end{rem}
In what follows, if $w=w_1\ldots w_n$ is a permutation of $[n]$ and  $u$ a subword of $w$, i.e., $u=w_iw_{i+1}\ldots w_j$ ($1\leq i\leq j\leq n$),   we write $u\subseteq w$ and $u\in w$ if $u$ is a single letter $w_i$.
%%%%%
\begin{prop}\label{Q3}
If  $\sigma\in\And_{n,k}$ and $S\subseteq \mathsf{Val}(\sigma)$, 
then $\tau:=\varphi(\sigma,S)\in \DD_{n,k}$ is well defined and 
\begin{align}\label{eq:s-tau}
S=\{x\in \mathsf{Val}(\tau)\,|\, \textrm{$x$ is a bad guy}\}.
\end{align}
%\begin{align*}
%\res\sigma+|S|=&\res\varphi(\sigma,S)\\
%\les\sigma-|S|=&\les\varphi(\sigma,S).
%\end{align*}
\end{prop}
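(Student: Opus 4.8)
The plan is to prove the two assertions of Proposition~\ref{Q3} together by induction on $|S|$, leaning on Proposition~\ref{Q2} for the single-transform step and on the structure of $x$-factorizations for the bookkeeping. The base case $S=\emptyset$ is immediate: $\tau=\sigma$ is Andr\'e, so by part (iii) of Proposition~\ref{x-fact-andre} every valley $x$ of $\sigma$ satisfies $\min(w_2)>\min(w_4)$, i.e. is a good guy, hence the set of bad guys is empty. For the inductive step I would first dispatch the claim that $\tau\in\DD_{n,k}$: Proposition~\ref{Q2} already tells us each elementary transform $\varphi(\cdot,x)$ preserves membership in $\DD_{n,k}$ (no double descent created, descent number unchanged), so the composite $\varphi(\sigma,S)=\varphi(\varphi(\sigma,S_1),S_2)$ lands in $\DD_{n,k}$ as soon as we know it is well defined, i.e. that at each stage the letter we are about to move is still a valley of the current permutation.

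\textbf{Key steps.} The heart of the argument is to show that applying the elementary transforms in the prescribed order keeps each scheduled index a valley and converts precisely the chosen valleys into bad guys, affecting no others. I would organize this around two observations about what $\varphi(\sigma,x)$ does locally. First, for a type~I valley $x$ (where $y=\min(y_1,y_2)$ is a peak or double ascent), inspecting the cases (i)--(ii) of Definition~\ref{def:action} shows that $x$ remains a valley, that $x$ turns from good into bad (this is the content of the $\res/\les$ exchange in Proposition~\ref{Q2}, now read as a statement about the good/bad dichotomy rather than just the statistics), and crucially that \emph{no other} valley changes status, because the only letters whose left/right position relative to a neighbouring peak or valley is altered are $x$ and $y$, and $y$'s own valley/peak status is untouched. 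Hence the $i_1,\dots,i_l$ in $S_1$ genuinely commute and each contributes exactly one new bad guy. Second, for the type~II valleys $j_1>j_2>\cdots>j_k$, the subtlety is that moving $j_1$ (whose associated $y$ is itself a valley) can disturb the $x$-factorization of the smaller-indexed valleys $j_2,\dots$; processing them in \emph{decreasing} order is exactly what is needed so that, when we come to $j_m$, the letter $y=\min(y_1,y_2)$ for $j_m$ in the current permutation is still the same as it was in $\sigma$ (any already-moved $j_{m'}$ with $m'<m$ sits on the far side and does not become the new minimal neighbour of $j_m$). I would verify this ordering claim by tracking, for each $j_m$, which letters can occupy the positions $\min(w_2),\min(w_4)$ after the earlier transforms — the point being that the earlier transforms only relocate letters that are $\geq j_{m-1}>j_m$-ish in the relevant window, or more precisely only relocate letters already on the ``heavy'' side.

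\textbf{Main obstacle.} The routine part is the no-double-descent and descent-count preservation, already furnished by Proposition~\ref{Q2}. The real work — and the step I expect to be delicate — is proving the well-definedness and the exact identity \eqref{eq:s-tau} simultaneously: one must show (a) every elementary transform in the sequence is legal (its target is currently a valley), (b) each scheduled index ends up a bad guy in $\tau$, and (c) nothing outside $S$ becomes bad and nothing inside $S$ stays good. These are entangled because (a) for a later transform depends on the local picture left by the earlier ones. I would untangle them by maintaining, as the induction hypothesis over the steps of the composite, the invariant that after processing a prefix $T\subseteq S$ the bad guys of the current permutation are exactly $T$ and every not-yet-processed element of $S$ is still a valley whose defining letter $y$ agrees with the one computed in $\sigma$; the type-I/type-II split and the decreasing order on $S_2$ are precisely the hypotheses that make this invariant propagate. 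Finally, to see that $S$ is \emph{all} of the bad guys of $\tau$ (not just contained in them), I would use that $\sigma\in\And_{n,k}$ has no bad guys to begin with, so every bad guy of $\tau$ must have been created by some elementary transform, hence lies in $S$; combined with the invariant this gives equality in \eqref{eq:s-tau}.
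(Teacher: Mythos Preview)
Your proposal is correct in spirit and follows essentially the same line as the paper's proof: Proposition~\ref{Q2} handles the single-step preservation in $\DD_{n,k}$, and the remaining work is well-definedness of the composite together with the bad-guy bookkeeping in~\eqref{eq:s-tau}. The one substantive difference is organizational. The paper extracts a clean structural lemma up front---for valleys $x_1<x_2$ in any $\sigma\in\DD_n$, the middle block $w_2'x_2w_4'$ of the $x_2$-factorization is entirely contained in one of the pieces $w_1,w_2,w_4,w_5$ of the $x_1$-factorization---and then both the commutativity of the $S_1$-transforms and the legitimacy of processing $S_2$ in decreasing order follow at once: each elementary transform only permutes letters inside its own middle block, hence inside a single piece of any smaller valley's factorization, so the pieces of every other valley's factorization are preserved \emph{as sets} and their good/bad status cannot change. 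Your inductive-invariant formulation reaches the same conclusion, but the specific justification ``the only letters whose left/right position relative to a neighbouring peak or valley is altered are $x$ and $y$'' is not literally true in case~(i) of Definition~\ref{def:action} (the entire block $w_2$ crosses $x$); what actually makes the conclusion hold is precisely this nesting property, which you gesture at in the type-II discussion (``the earlier transforms only relocate letters \ldots on the heavy side'') but never isolate. Stating that lemma explicitly would tighten your argument and replace the step-by-step invariant with a one-line reason why no valley outside $S$ ever changes status; otherwise your outline is sound and equivalent to the paper's.
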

%%%%%%%%%%%%%%%
\begin{proof}
For $\sigma\in\DD_{n,k}$ and $x_1, x_2\in S$, 
let $(w_1,w_2,x_1,w_4,w_5)$ and  $(w_1',w_2',x_2,w_4',w_5')$ be the
$x_1$-factorization and $x_2$-factorization of $\sigma$, respectively.
Assume that  $x_1<x_2$, hence 
 $x_1\in w_1'$ or $x_1\in w_5'$.  Thus, if $w_2x_1 w_4\cap w_2'x_2w_4'\neq \emptyset$, then 
$w_2'x_2w_4'\subseteq w_4$ or $w_2'x_2w_4'\subseteq w_2$, namely,  one of the following 
holds:
$$
w_2'x_2w_4'\subseteq w_1, \quad w_2'x_2w_4'\subseteq w_2, \quad 
w_2'x_2w_4'\subseteq w_4, \quad w_2'x_2w_4'\subseteq w_5.
$$ 
Thus when we apply $\varphi$ to the elements of $S_1$ the image $\varphi(\sigma, S_1)$ is independ from the 
order of elements, and $\varphi(\sigma, S_2)$ is well defined if we apply $\varphi$ to the elements of $S_2$ 
in decreasing order.  As for each $\sigma\in \And_{n,k}$ and any $x\in \mathsf{Val}(\sigma)$ there holds
$y_1>y_2$,  and for each element of $S$ the application of $\varphi$ switches $y_2$ from  right to left of $x$, we have \eqref{eq:s-tau}.
\end{proof}

%%%%%%%%%%%%%%%%%%
For any set $S$ we denote by $ 2^{S} $  the set of all subsets of $S$. 
In what follows,
for $\sigma\in  \DD_{n,k}$ we will identify  $\mathsf{Val}(\sigma)$ with  $[k]$ under the map
$a_i\mapsto i$ for $i\in [k]$ if $\mathsf{Val}(\sigma)$ consists of $a_1<a_2<\ldots<a_k$,  and identify 
any subset $S\in  \mathsf{Val}(\sigma)$ with its image $S'\in 2^{[k]}$. Thus we will use $2^{[k]}$ instead of 
$2^{ \mathsf{Val}(\sigma)}$.
\begin{prop}\label{key1}
The map 
$\varphi:   \And_{n,k}\times 2^{[k]}\to \mathcal{G}_{n,k}$ is  a bijection 
 such that   for $(\sigma,S)\in   \And_{n,k}\times 2^{[k]}$ we have 
\begin{equation} \label{prop}
\begin{split}
\res\sigma +|S|& = \res\varphi(\sigma, S)\\
\312\sigma -|S|& = \312\varphi(\sigma, S).
\end{split}
\end{equation}
\end{prop}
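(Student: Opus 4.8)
## Proof proposal

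The plan is to establish Proposition~\ref{key1} by assembling the pieces built up in the preceding propositions. By Proposition~\ref{Q3}, for each $(\sigma,S)\in\And_{n,k}\times 2^{[k]}$ the permutation $\tau=\varphi(\sigma,S)$ lies in $\DD_{n,k}$; the first task is to upgrade this to $\tau\in\mathcal{G}_{n,k}$, i.e.\ to show $\dd\tau=0$. But $\mathcal{G}_{n,k}=\{\tau\in\S_n:\valley\tau=k,\ \dd\tau=0\}$ and $\DD_{n,k}$ already records $\dd\tau=0$ (no double descents) together with $\des\tau=k$; since $\des=\valley+\dd$, having $\dd\tau=0$ forces $\valley\tau=k$, so in fact $\DD_{n,k}=\mathcal{G}_{n,k}$ as sets and there is nothing extra to check. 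Thus $\varphi$ does land in $\mathcal{G}_{n,k}$.

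Next I would verify the weight identities \eqref{prop}. Here the key input is Proposition~\ref{Q2}, which says that a single transform $\varphi(\cdot,x)$ raises $\res$ by $1$ and lowers $\les$ by $1$ when $x$ is a good valley, and does the opposite when $x$ is bad. For $\sigma\in\And_{n,k}$, part (iii) of Proposition~\ref{x-fact-andre} guarantees that \emph{every} valley of $\sigma$ is good ($y_1>y_2$); moreover one must check (as in the proof of Proposition~\ref{Q3}) that the ``good/bad'' status of a valley not in $S$, and more importantly the relevant local $x$-factorization data of each valley still to be transformed, is not disturbed by transforming the other valleys — this is exactly the disjointness-of-supports observation already made in the proof of Proposition~\ref{Q3}, namely that for $x_1<x_2$ in $S$ the blocks $w_2x_1w_4$ and $w_2'x_2w_4'$ are either disjoint or nested. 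Consequently each of the $|S|$ successive applications of $\varphi$ acts on a \emph{good} valley and contributes $+1$ to $\res$ and $-1$ to $\les$, giving $\res\tau=\res\sigma+|S|$ and $\les\tau=\les\sigma-|S|$. Writing $\les\sigma=\312\sigma$ in the paper's notation yields the second line of \eqref{prop}.

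It remains to prove bijectivity. For injectivity and surjectivity I would construct the inverse map explicitly. Given $\tau\in\mathcal{G}_{n,k}$, let $S^\ast\subseteq\mathsf{Val}(\tau)$ be the set of \emph{bad} valleys of $\tau$; I claim $\sigma:=\varphi(\tau,S^\ast)$ — where on $\DD_n$ the transform is defined so as to move the smaller of $y_1,y_2$ back to the other side — lies in $\And_{n,k}$, and that $(\sigma,S^\ast)$ is the unique preimage of $\tau$. That $\varphi(\sigma,S)\mapsto S$ under ``take the bad valleys'' is precisely \eqref{eq:s-tau} of Proposition~\ref{Q3}, so the composite $\And_{n,k}\times 2^{[k]}\to\mathcal{G}_{n,k}\to\And_{n,k}\times 2^{[k]}$ is the identity; for the other composite one uses that $\varphi(\cdot,x)$ is an involution in $x$ together with the same support-disjointness to see that reapplying the transforms along $S^\ast$ undoes them. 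The one point requiring care is that after removing the bad valleys the resulting $\sigma$ genuinely satisfies condition (iii) of Proposition~\ref{x-fact-andre} for \emph{all} its valleys (not just those that were touched), so that $\sigma\in\And_{n,k}$; this, together with the well-definedness of the order in which elements of $S_2$ are processed, is the main obstacle, and I expect it to be handled by an induction on $|S|$ (or on $k$) that tracks how each transform affects the $y_1$ versus $y_2$ comparison at the remaining valleys, leaning on the nested/disjoint structure of the $x$-factorization blocks established in Proposition~\ref{Q3}. Once injectivity, surjectivity, and \eqref{prop} are in hand, the proposition follows, and (as the paper will use) summing $p^{\res}q^{\312}$ over a fibre $\{\sigma\}\times 2^{[k]}$ reproduces the factor $(p+q)^k$ in \eqref{gamma-d}.
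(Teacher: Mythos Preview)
Your proposal follows the same overall strategy as the paper: invoke Propositions~\ref{Q2} and~\ref{Q3} to get $\varphi(\sigma,S)\in\DD_{n,k}=\mathcal{G}_{n,k}$ together with the weight shift~\eqref{prop}, and then recover $(\sigma,S)$ from $\tau$ via the set of bad valleys, using \eqref{eq:s-tau}.

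The one place where your sketch diverges from the paper, and where a real gap remains, is the description of the inverse. You propose to set $\sigma:=\varphi(\tau,S^\ast)$ with $S^\ast$ the bad valleys of $\tau$, relying on ``$\varphi(\cdot,x)$ is an involution'' plus ``support-disjointness''. But the disjointness/nesting argument in Proposition~\ref{Q3} only gives order-independence for the type~I valleys; for type~II valleys the blocks may be nested, and the paper's own Remark after the definition of $\varphi(\sigma,S)$ says explicitly that the order of the $S_2$-steps matters. Since the forward map applies $S_1$ first and then $S_2$ in \emph{decreasing} order, its inverse must undo $S_2$ in \emph{increasing} order and then $S_1$; simply reapplying the forward $\varphi(\cdot,S^\ast)$ to $\tau$ is not the right procedure. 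The paper resolves this not by a single application of $\varphi(\tau,S^\ast)$ but by an explicit algorithm: starting from $(\sigma,S)=(\tau,\emptyset)$, repeatedly transform the \emph{minimum} current bad type~II valley (recomputing $S_2(\sigma)$ after each step), and once no bad type~II valley remains, transform all bad type~I valleys at once. The finiteness of the loop is checked by observing that the just-transformed $z$ becomes good, so the new minimum strictly increases. You correctly flag the order of $S_2$ as ``the main obstacle''; the paper's iterative algorithm is precisely its resolution, and your proposal would be complete once you replace the naive $\varphi(\tau,S^\ast)$ by this reversed-order procedure.
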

\begin{proof}
By Propositions~\ref{Q2} and \ref{Q3}, for $(\sigma, S)\in \And_{n,k}\times  2^{[k]}$ the image
$\tau:=\varphi(\sigma, S)$ is an element in  $\mathcal{G}_{n,k}$ and satisfies \eqref{eq:s-tau} and \eqref{prop}.
To show that $\varphi$ is bijective,
we construct   the reverse of $\varphi$. 
For any $\tau\in\DD_{n,k}$,  let $\mathsf{Val}^{(1)}(\tau)$ and $\mathsf{Val}^{(2)}(\tau)$ be the sets of valleys of $\tau$ of types	 I and II, respectively, and  define
\begin{align*}
S_2(\tau):&=\{x\in \mathsf{Val}^{(2)}(\tau)\,|\,  \textrm{$x$ is a bad guy}\};\\
S_{1}(\tau):&=\{x\in \mathsf{Val}^{(1)}(\tau)\,|\,  \textrm{$x$ is a bad guy}\}.
\end{align*}
Note that $\varphi(\sigma, \emptyset)=\sigma$ and $\sigma\in \And_{n,k}$ if and only if $S_{1}(\sigma)\cup S_2(\sigma)=\emptyset$.
 We recover the pair 
$(\sigma, S)\in \And_{n,k}\times  2^{[k]}$ such that $\varphi(\sigma, S)=\tau$
by applying the 
following algorithm:
\begin{itemize}
\item[(i)]  Input $(\sigma, S):=(\tau, \emptyset)$.
%\item[(i)] Let $S_2(\sigma):=\{x\in \mathsf{Val}^{(2)}\sigma\,|\,  y_1<y_2\}$. 
\item[(ii)] While  $S_2(\sigma):=\{x\in \mathsf{Val}^{(2)}\sigma\,|\,  y_1<y_2\}\neq \emptyset$ do 
$(\sigma, S):=(\varphi(\sigma,z), S\cup\{z\})$ with $z:=\min{S_2(\sigma)}$.
\item[(iii)] Let $(\sigma, S)$ be the output of (ii) with $S_{1}(\sigma):=\{x\in \mathsf{Val}^{(1)}\sigma\,|\,  y_1<y_2\}$, and 
$$(\sigma, S):=(\varphi(\sigma,S_1(\sigma)), S\cup S_{1}(\sigma)).$$
\end{itemize}
To see  that  the loop (ii) is finite we just need to verify (easy!) that  $z$ is a good guy in
$\varphi(\sigma, z)$, which implies that 
if $z'=\min S_2(\varphi(\sigma, z))$ then
$z'>z$. 
It is clear that $S=S_1(\tau)\cup S_2(\tau)$ and $\varphi(\sigma, S)=\tau$. 
\end{proof}

%%%%%%%%%%%%%%%%%%%%%

For the reader's  convenience, we  run the map $\varphi: \And_{5,2} \times 2^{[2]}\to \DD_{5,2}$ in Figure~\ref{fig:52}, and give one  example 
for  $\varphi: \And_{5,2} \times 2^{[2]}\to \DD_{5,2}$  and   $\varphi^{-1}: \DD_{13,5}\to \And_{13,5}\times 2^{[5]}$, respectively.
%%%%%%%%%%%%%%%%%%%%%%
\begin{itemize}
\item[(A)] If $\sigma=31524\in\And_{5,2}$ and $S=\{1,2\}$, then 
\begin{itemize}
\item for the  valley $1$, $\min(y_1,y_2)=2$ is a valley, so $1$ is a type II valley.
\item for the valley $2$,  $\min(y_1,y_2)=4$ is a peak, so $2$ is a type I valley.
\end{itemize}
So, we should first deal with the valley $2$, the $2$-factorization is 
$(w_1,\,w_2,\,x,\,w_4,\,w_5)=(31,5,\,2,\,4,\,\emptyset)$ 
%$w_1=31,\,w_2=5,\,x=2,\,w_4=4,\,w_5=\emptyset$ 
according to case $(i)$ of $\varphi$, we just  exchange $4$ and $5$, and 
get $31425$; then we apply $\varphi$ to   the valley $1$ in $31425$, 
the $1$-factorization is $(w_1,\,w_2,\,x,\,w_4,\,w_5)=(\emptyset,3,\,1,\,425,\,\emptyset)$,  
%$w_1=\emptyset,\,w_2=3,\,x=1,\,w_4=425,\,w_5=\emptyset$ 
which is case $(iii)$ of $\varphi$,  we just exchange $1$ and $2$, and  get $\varphi(\sigma,S)=32415$.

\item[(B)]
For
$\tau=11\,\mathbf{2}\,12\,13\,\mathbf{1}\,6\,\mathbf{4}\,5\,\mathbf{3}\,8\,9\,\mathbf{7}\,10\in\DD_{13,5}$.
\begin{itemize}
\item[(i)] First let $(\sigma, S):=(\tau, \emptyset)$.  We have  $S_{2}=\{1,3\}$ 
and $\min(S_{2})=1$, so $S=\{1\}$ and 
%$\sigma=(\epsilon, 11\,\mathbf{2}\,12\,13,\, \mathbf{1},\, \,6\,\mathbf{4}\,5\,\mathbf{3}\,8\,9\,\mathbf{7}\,10, \epsilon)$, 
$$
\sigma:=\varphi(\sigma, 1)=11\,\mathbf{1}\,12\,13\,\mathbf{2}\,6\,\mathbf{4}\,5\, \mathbf{3}\,8\,9\,\mathbf{7}\,10.
$$
As  $S_{2}=\{3\}$, we have $\min S_{2}=3$ and 
$S:=\{1,3\}$, hence
%As $\sigma=(11\,\mathbf{1}\,12\,13\, \mathbf{2},\, \,6,\,\mathbf{3}, \,5\,\mathbf{4}\,8\,9\,\mathbf{7}\,10, \epsilon)$.
$$
\sigma:=\varphi(\sigma,3)=11\,\mathbf{1}\,12\,13\,\mathbf{2}\,6\,\mathbf{3}\,5\, \mathbf{4}\,8\,9\,\mathbf{7}\,10.
$$
\item[(ii)]
Now for $\sigma$, $S_{2}=\emptyset$ and $S_1=\{4,7\}$, then $S:=\{1,3,4,7\}$ and 
$$
\sigma:=\varphi(\sigma, S_1)=11\,\mathbf{1}\,12\,13\,\mathbf{2}\,6\,\mathbf{3}\,10\,\mathbf{7}\,8\, 9\,\mathbf{4}\,5\in\And_{13,5}.
$$ 
We can check  that $\varphi(\sigma, S)=\tau$.
\end{itemize}
\end{itemize}

\begin{figure}[t]
\begin{tabular}{|c|c|c|c|}
\hline
$\sigma\in\And_{5,2}$ & $(2-13)$ & $(31-2)$&$S\in 2^{\mathsf{Val}\sigma}$\\\hline
$31524$ & $2$ & $2$&$\emptyset$\\\hline
$3\mathbf{1}524$ & $2$ & $2$&$\{1\}$\\\hline
$315\mathbf{2}4$ & $2$ & $2$&$\{2\}$\\\hline
$3\mathbf{1}5\mathbf{2}4$ & $2$ & $2$&$\{1,2\}$\\\hline\hline
$41523$ & $1$ & $3$&$\emptyset$\\\hline
$4\mathbf{1}523$ & $1$ & $3$&$\{1\}$\\\hline
$415\mathbf{2}3$ & $1$ & $3$&$\{2\}$\\\hline
$4\mathbf{1}5\mathbf{2}3$ & $1$ & $3$&$\{1,2\}$\\\hline\hline
$51423$ & $0$ & $4$&$\emptyset$\\\hline
$5\mathbf{1}423$ &$0$ & $4$&$\{1\}$\\\hline  
$514\mathbf{2}3$ & $0$ & $4$&$\{2\}$\\\hline
$5\mathbf{1}4\mathbf{2}3$ & $0$ & $4$&$\{1,2\}$\\\hline\hline
$53412$ & $0$ & $2$&$\emptyset$\\\hline
$534\mathbf{1}2$ & $0$ & $2$&$\{1\}$\\\hline
$5\mathbf{3}412$ & $0$ & $2$&$\{3\}$\\\hline
$5\mathbf{3}4\mathbf{1}2$ &  $0$ & $2$&$\{1,3\}$\\\hline
\end{tabular}
\hspace{-0.3cm}
%%%%%%%%%%%%%%%%%%%%%%%%%
\begin{tabular}{|c|c|c|}
\hline
$\tau\in\DD_{5,2}$ & $(2-13)$ & $(31-2)$\\\hline
$3\mathbf{1}5\mathbf{2}4$ & $2$ & $2$\\\hline
$3\mathbf{2}5\mathbf{1}4$ & $3$ & $1$\\\hline
$3\mathbf{1}4\mathbf{2}5$ & $3$ & $1$\\\hline
$3\mathbf{2}4\mathbf{1}5$ & $4$ & $0$\\\hline\hline
$4\mathbf{1}5\mathbf{2}3$ & $1$ & $3$\\\hline
$4\mathbf{2}5\mathbf{1}3$ & $2$ & $2$\\\hline
$4\mathbf{1}3\mathbf{2}5$ & $2$ & $2$\\\hline
$4\mathbf{2}3\mathbf{1}5$ & $3$ & $1$\\\hline\hline
$5\mathbf{1}4\mathbf{2}3$ & $0$ & $4$\\\hline
$5\mathbf{2}4\mathbf{1}3$ & $1$ & $3$\\\hline  
$5\mathbf{1}3\mathbf{2}4$ & $1$ & $3$\\\hline
$5\mathbf{2}3\mathbf{1}4$ & $2$ & $2$\\\hline\hline
$5\mathbf{3}4\mathbf{1}2$ & $0$ & $2$\\\hline
$2\mathbf{1}5\mathbf{3}4$ & $1$ & $1$\\\hline
$4\mathbf{3}5\mathbf{1}2$ & $1$ & $1$\\\hline
$2\mathbf{1}4\mathbf{3}5$ & $2$ & $0$\\\hline
\end{tabular}
%\end{center}
\caption{The bijection $\varphi: (\sigma, S)\mapsto \tau$ from $ \And_{5,2}\times 2^{[2]}$ to $\DD_{5,2}$}
\label{fig:52}
\end{figure}
%%%%%%%%%%%%%%%%%%%%%%%%%%
\subsection{Proof of Theorem~\ref{thm:main1}}
Clearly \eqref{gamma-pq}  is a special case of Corollary~\ref{coro: gamma}, and 
\eqref{gamma-d} is equivalent to
\begin{align}\label{eq:equiv}
(p+q)^k \,\sum_{\sigma\in \And_{n,k}}p^{\res\sigma}q^{\les\sigma-k}=
\sum_{\sigma\in \DD_{n,k}}p^{\res\sigma}q^{\les\sigma}.
\end{align}
As $(p+q)^k=\sum_{S\in 2^{[k]}}p^{|S|}q^{k-|S|}$ we can rewrite the above identity as
$$
\sum_{(\sigma,S)\in  \And_{n,k}\times 2^{[k]} }p^{\res\sigma+|S|}q^{\les\sigma-|S|}=
\sum_{\sigma\in \DD_{n,k}}p^{\res\sigma}q^{\les\sigma}.
$$
The result  follows  from Proposition~\ref{key1}.
\qed

%%%%%%%%%%%%%%%%%%%%%%%%
\subsection{Proof of Theorem~\ref{thm:main2}}
%%%%%%%%%%%%%%%%%%%%
We shall use 
the J-type continued fraction  as a  formal power series defined by
 $$
\sum_{n=0}^\infty \mu_nt^n=\cfrac{1}{1-b_0 t-\cfrac{\lambda_1 t^2}
{1-b_1 t-\cfrac{\lambda_2t^2}{1-\cdots}}},
 $$ 
 where $(b_n)$ and $(\lambda_{n+1})$ ($n\geq 0$) are two sequences in a commutative ring.
When $b_n=0$ we obtain the S-type  continued fraction:
$$
\sum_{n=0}^\infty \mu_nt^n=\cfrac{1}{1-\cfrac{\lambda_1 t}{1-\cfrac{\lambda_2 t}{1-\cdots}}}.
 $$ 

%%%%%
Recall   the following continued fraction expansion formula from  \cite[(28)]{SZ12}:
\begin{equation} \label{eq:master}
\begin{split}
\sum_{n\ge1}& A_n(p,q,t,u,v,w)x^{n-1}=\\
&\dfrac{1}
{1-(u+tv)[1]_{p,q}x-\dfrac{[1]_{p,q}[2]_{p,q}twx^2}
{1-(u+tv)[2]_{p,q}x-\dfrac{[2]_{p,q}[3]_{p,q}twx^2}
{\cdots}}}
\end{split}
\end{equation}
with $b_n=(u+tv)[n+1]_{p,q}$ and $\la_n=[n]_{p,q}[n+1]_{p,q}tw$.

By Theorem~\ref{thm:main1} and  substituting  $(t,u,v, w)$ with $(p+q, 0, 1,t)$ in \eqref{eq:gamma}, 
we obtain
$$
A_n(p,q,p+q, 0, 1,t)=(p+q)^{n-1}\sum_{k=0}^{\lfloor (n-1)/2\rfloor}d_{n,k}(p, q)t^k.
$$
Thus, substituting  $(t,u,v, w)$ with $(p+q, 0, 1,t)$ in \eqref{eq:master} and replacing $x$ by $x/(p+q)$ we obtain
the same continued fraction in \eqref{pq-shift}.  This proves \eqref{eq:1}.
\qed
%%%%%%%%%%%%%%%%%%%%%%%%%%%%%%%%%%%%
\section{Two explicit formulae}
%%%%%%%%%%%%%%%%%%%%%%%%%%%%%%%%%%%%%%%%%%%%

First we derive a formula  for $D_n(1,q,t)$ from 
the work of Shin-Zeng~\cite{SZ12}, Han-Mao-Zeng~\cite{HMZ} and 
Josuat-Verg\`es~\cite{JV11}.
\begin{thm}\label{formula(p=1)} For $n\geq 1$ we have 
\begin{align*}
D_n(1,q,t)
&=\frac{1}{v(1-q)}\left(\frac{1+u}{(1+uv)(1-q^2)}\right)^{n-1}\\
&\times \sum_{k=0}^n(-1)^k\left(\sum_{j=0}^{n-k}v^j\binom{n}{j}\binom{n}{j+k}-\binom{n}{ j-1}
\binom{n}{j+k+1}\right)\cdot \left(\sum_{i=0}^kv^iq^{i(k+1-i)}\right),\nonumber
\end{align*}
where 
\begin{align}
u=&\frac{1+q^2-2(1+q)t-(1+q)\sqrt{(1+q)^2-4(1+q)t}}{2(q-t(1+q))}\label{u},\\
v=&\frac{(1+q)-2t-\sqrt{(1+q)^2-4t(1+q)}}{2t}\label{v}.
\end{align}
\end{thm}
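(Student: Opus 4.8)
The plan is to recognise the continued fraction \eqref{pq-shift} at $p=1$ as being, after the quadratic change of variables encoded in \eqref{u}--\eqref{v}, of \emph{Al-Salam--Chihara} type, and then to quote the explicit Touchard--Riordan-type evaluation of the moments of such a J-fraction (due to Josuat-Verg\`es~\cite{JV11} and Han--Mao--Zeng~\cite{HMZ}, resting on the continued fraction of Shin--Zeng~\cite{SZ12}).

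Concretely, setting $p=1$ in \eqref{pq-shift}, the generating function $\sum_{n\ge0}D_{n+1}(1,q,t)x^n$ is the J-fraction with $b_n=[n+1]_q$ and $\lambda_n=\binom{n+1}{2}_{1,q}\,t=\tfrac{t}{1+q}[n]_q[n+1]_q$; rescaling $x\mapsto(1+q)x$, the numbers $(1+q)^{n-1}D_n(1,q,t)$ are the moments of the J-fraction with $b_n=(1+q)[n+1]_q$, $\lambda_n=(1+q)t[n]_q[n+1]_q$ --- exactly the specialisation $(t,u,v,w)\mapsto(1+q,0,1,t)$ of \eqref{eq:master} already used in the proof of Theorem~\ref{thm:main2}. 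I would then write $a,b$ for the two roots of $z^2-(1+q)z+(1+q)t=0$, so that $b_n=(a+b)[n+1]_q$ and $\lambda_n=ab\,[n]_q[n+1]_q$; a short computation shows that the branch of the radical chosen in \eqref{u}--\eqref{v} is the one with
\[
t=\frac{(1+q)v}{(1+v)^{2}},\qquad a=\frac{1+q}{1+v},\qquad b=\frac{(1+q)v}{1+v}\quad(\text{hence }v=\tfrac ba),
\]
and that, moreover, $(1-a)(1-b)=(1+q)t-q$, $\ 1+u=\dfrac{b(a-b)}{(1-a)(1-b)}$ and $\ 1+uv=\dfrac{(a-b)\bigl(b(1+q)-q\bigr)}{a\,(1-a)(1-b)}$, so in particular $\dfrac{1+u}{1+uv}=\dfrac{ab}{b(1+q)-q}$.

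Next I would quote the closed form for the $N$-th moment of the Al-Salam--Chihara J-fraction $b_n=(a+b)[n+1]_q,\ \lambda_n=ab[n]_q[n+1]_q$: it equals $(1-q)^{-(N+1)}$ times an elementary prefactor of the shape $c(a,b)\,d(a,b)^{N}$ (with $c,d$ explicit rational functions of $a,b$) times the alternating ballot-type sum
\[
\sum_{k\ge0}(-1)^{k}\Bigl(\sum_{j\ge0}\Bigl(\tfrac ba\Bigr)^{j}\Bigl[\binom{N+1}{j}\binom{N+1}{j+k}-\binom{N+1}{j-1}\binom{N+1}{j+k+1}\Bigr]\Bigr)\Bigl(\sum_{i=0}^{k}\Bigl(\tfrac ba\Bigr)^{i}q^{\,i(k+1-i)}\Bigr),
\]
the inner double sum being evaluated by the Lindstr\"om--Gessel--Viennot lemma; this is the Touchard--Riordan-type formula of \cite{JV11,HMZ}. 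Setting $N=n-1$, replacing $b/a$ by $v$ and $a,b$ by the values above, and dividing by $(1+q)^{n-1}$, the $q$-binomials turn into $\binom{n}{\bullet}$, the power $(1-q)^{-n}(1+q)^{-(n-1)}$ is produced, and --- using $(1-q^{2})^{n-1}=(1-q)^{n-1}(1+q)^{n-1}$ together with the identities for $1+u$, $1+uv$ and $(1-a)(1-b)$ above --- the prefactor $c(a,b)\,d(a,b)^{n-1}$ collapses to $\dfrac{1}{v(1-q)}\Bigl(\dfrac{1+u}{(1+uv)(1-q^{2})}\Bigr)^{n-1}$, which is the asserted formula.

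I expect the main obstacle to be this final reconciliation of the prefactors: the output of the master formula is most naturally expressed in the parameters $a,b$, and showing that it equals the stated expression in $u,v,q$ requires a somewhat delicate rational manipulation (and, along the way, fixing the correct labelling of $a$ versus $b$, i.e. the branch of $\sqrt{(1+q)^{2}-4t(1+q)}$ in \eqref{u}--\eqref{v}, which is forced by requiring the right-hand side to reduce to the polynomial $D_n(1,q,t)$). A convenient consistency check at this point is that the alternating double sum vanishes at $v=0$ --- it specialises to $\sum_{k\ge0}(-1)^k\binom nk=0$ --- so that the apparent pole of the prefactor at $v=0$ is indeed removable.
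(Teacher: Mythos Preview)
Your approach uses exactly the same three references as the paper (\cite{SZ12}, \cite{HMZ}, \cite{JV11}) and is correct in outline, but the paper organises the computation so that the ``main obstacle'' you anticipate never arises. Instead of specialising \eqref{eq:master} at $(t,u,v,w)=(1+q,0,1,t)$ and then introducing the auxiliary roots $a,b$ of the quadratic, the paper specialises at $(t,u,v,w)=(q,1,1,t(1+q^{-1}))$ --- which by \eqref{eq:gamma} yields the same quantity $(1+q)^{n-1}D_n(1,q,t)$, since in both cases $tw=(1+q)t$ and $u+vt=1+q$ --- and then quotes \cite[Corollary~3.2]{HMZ} directly, obtaining
\[
A_n\bigl(1,q,q,1,1,t(1+q^{-1})\bigr)=\Bigl(\frac{1+u}{1+uv}\Bigr)^{n-1}\sum_{\sigma\in\S_n}v^{\des\sigma}q^{\les\sigma}
\]
with $u,v$ already given by \eqref{u}--\eqref{v}. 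The remaining sum is precisely the one evaluated by \cite[Theorem~6.3]{JV11}, and dividing by $(1+q)^{n-1}$ gives the stated formula with no further algebra.

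In other words, the quadratic reparametrisation $a,b\leftrightarrow u,v$ and the prefactor bookkeeping that you propose to carry out by hand are precisely the content of the cited corollary in \cite{HMZ}; your route re-derives that result inline rather than citing it. What the paper's organisation buys is that the Josuat-Verg\`es formula is only needed in its original form (the case $a=1$, $b=v$ of your Al-Salam--Chihara moments), so no general moment formula for arbitrary $a,b$ has to be located or verified.
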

\begin{proof} 
%Specializing $(p,q,t, u,v, w)$ with $(1,q,q, 1,1, t(1+q^{-1}))$ in  \eqref{eq:gamma} we obtain
By \eqref{eq:gamma} we have 
\begin{align}
A_n(1,q,q, 1,1,  t(1+q^{-1}))=(1+q)^{n-1}D_n(1, q,t).
%&=\left(\frac{1+u}{(1+uv)(1+q)}\right)^{n-1}\sum_
%{\sigma\in\S_n}v^{\des\sigma}q^{(31-2)\sigma}
\end{align}
From Corollary~3.2 in~\cite{HMZ} we derive
$$
A_n(1,q,q,1,1, t(1+q^{-1}))=\left(\frac{1+u}{1+uv}\right)^{n-1}\sum_
{\sigma\in\S_n}v^{\des\sigma}q^{\les\sigma},
$$
where $u$ and $v$ are given by \eqref{u} and \eqref{v}.
By Theorem~6.3 in~\cite{JV11}, we have 
\begin{align*}
\sum_
{\sigma\in\S_n}y^{\des\sigma}q^{\les\sigma}&=
\sum_{\sigma\in\S_n}y^{\asc\sigma}q^{(13-2)\sigma}\\
&=\frac{1}{y(1-q)^n}\sum_{n=0}^n(-1)^k\left(\sum_{j=0}^{n-k}y^j\binom{n}{ j}\binom{n}{ j+k}-\binom{n}{j-1}\binom{n}{ j+k+1}\right)\\
&\hspace{3cm}\times \left(\sum_{i=0}^ky^iq^{i(k+1-i)}\right).
\end{align*}
Putting the above three formulae together  completes the proof.
\end{proof}
A \emph{Motzkin path}  of length  $n$ is a sequence of points 
$\eta:=(\eta_0, \ldots, \eta_n)$  in the integer plane 
$\mathbb{Z}\times\mathbb{Z}$  such that 
\begin{itemize}
\item $\eta_0=(0,0)$ and $\eta_n=(n,0)$,
\item $\eta_i-\eta_{i-1}\in \{(1,0), (1, 1), (1,-1)\}$,
\item $\eta_i:=(x_i, y_i)\in \N\times \N$ for $i=0,\ldots, n$.
\end{itemize}
In other words, a Motzkin path of length $n$ is a lattice path starting at $(0,0)$, ending at $(n,0)$, 
and never going below the $x$-axis, consisting of up-steps $\mathsf{U}=(1,1)$, level-steps $\mathsf{L}=(1,0)$, and down-steps
$\mathsf{D}=(1,-1)$. 
Let $\mathcal{MP}_n$ be the set of Motzkin paths of length $n$.
Clearly we can  identify 
 Motzkin paths of length $n$ with words $w$ on 
 $\{\mathsf{U, L, D}\}$ of length $n$ such that all prefixes of $w$ 
 contain no more $\mathsf{D}$'s than $\mathsf{U}$'s and the number of $\mathsf{D}$'s equals the number of $\mathsf{D}'s$.
The height of a step $(\eta_i, \eta_{i+1})$ is the coordinate of the starting point $\eta_i$.
Given a Motzkin path $p\in \mathcal{MP}_n$ and two sequences 
$(b_i)$ and $(\lambda_i)$  of a commutative ring $\mathrm{R}$, 
we weight   
each up-step by 1, and each level-step (resp. down-step)   at height $i$ by 
$b_i$ (resp. $\lambda_i$) and  define the weight $w(p)$ of $p$  by 
  the product of the weights of all its steps. 
  The following result of Flajolet~\cite{Fl80} is our starting point.

%%%%%%%%%%%%%%%%%%
\begin{lem}[Flajolet]\label{flajolet} We have 
$$
\sum_{n=0}^{\infty}\left(\sum_{p\in \mathcal{MP}_n}w(p)\right)t^n
=\cfrac{1}{1-b_0t-\cfrac{\lambda_1t^2}{1-b_1t-\cfrac{\lambda_2t^2}{1-b_2t-\cdots}}}.
$$
\end{lem}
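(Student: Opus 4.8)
The plan is to prove Lemma~\ref{flajolet} by the classical \emph{first-return decomposition} of Motzkin paths, which turns the self-similar recursive structure of the paths into the layered structure of the continued fraction. Throughout, every generating function is regarded as an element of the formal power series ring $\mathrm{R}[[t]]$.

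First I would introduce, for every integer $k\geq 0$, the ``elevated'' generating function
\[
F_k:=\sum_{n\geq 0}\Bigl(\,\sum_{p}w(p)\Bigr)t^{n},
\]
where for each $n$ the inner sum runs over all lattice paths $p$ from $(0,k)$ to $(n,k)$ with steps in $\{\mathsf{U},\mathsf{L},\mathsf{D}\}$ that never go below the line $y=k$, each up-step weighted $1$, each level-step at height $i$ weighted $b_i$, and each down-step from height $i$ to height $i-1$ weighted $\lambda_i$ (the weight $w(p)$ being the product of the weights of the steps, as before). By the very definition of $w$ and $\mathcal{MP}_n$, the series $F_0$ is exactly the left-hand side of the lemma.

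Next I would establish, for every $k\geq 0$, the identity
\begin{equation}\label{eq:Frec}
F_k=1+b_k t\,F_k+\lambda_{k+1}t^2\,F_{k+1}\,F_k,
\qquad\text{equivalently}\qquad
F_k=\cfrac{1}{1-b_k t-\lambda_{k+1}t^2 F_{k+1}}.
\end{equation}
To obtain the left-hand form, classify a path $p$ counted by $F_k$ according to its initial step (it cannot begin with a down-step, since it must stay weakly above $y=k$). The empty path contributes the summand $1$. If $p$ begins with a level-step at height $k$ (weight $b_k$, length $1$), removing this step leaves an arbitrary path counted by $F_k$, which gives the summand $b_k t\,F_k$. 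If $p$ begins with an up-step $\mathsf{U}$ from height $k$ to height $k+1$, pair this $\mathsf{U}$ with the step of $p$ that realizes the first return of $p$ to height $k$; that step is forced to be a down-step from height $k+1$ to $k$ (weight $\lambda_{k+1}$, length $1$), the portion of $p$ strictly between the opening $\mathsf{U}$ and this closing $\mathsf{D}$ starts and ends at height $k+1$, stays weakly above $y=k+1$, and carries the same weights, so after a horizontal translation it is an arbitrary path counted by $F_{k+1}$, while the portion of $p$ after the closing $\mathsf{D}$ is an arbitrary path counted by $F_k$; accounting for the two extra steps gives the summand $\lambda_{k+1}t^2\,F_{k+1}\,F_k$. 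Summing the three contributions yields the left-hand identity in \eqref{eq:Frec}, and solving for $F_k$ gives the right-hand one.

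Finally I would iterate \eqref{eq:Frec}. Because $F_{k+1}$ enters \eqref{eq:Frec} only through the term $\lambda_{k+1}t^2 F_{k+1}$, substituting the expression for $F_{k+1}$ into that for $F_k$ and repeating shows that, for each $N$, the coefficient of $t^N$ in $F_0$ coincides with the coefficient of $t^N$ in the finite continued fraction obtained by unfolding \eqref{eq:Frec} more than $N$ times; hence the infinite continued fraction in the statement is a well-defined element of $\mathrm{R}[[t]]$ and equals $F_0$, which is the assertion of the lemma. The only point requiring genuine care — as opposed to routine verification — is the bookkeeping of heights and weights in the first-return decomposition, so that the middle excursion is recognized as precisely an object counted by $F_{k+1}$, together with the $t$-adic convergence argument that legitimizes passing from the finite recursion \eqref{eq:Frec} to the infinite continued fraction; both are standard, but that is where the substance of the proof lies.
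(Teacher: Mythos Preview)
Your argument is correct: this is exactly Flajolet's classical first-return decomposition, and the weight bookkeeping (up-step weighted $1$, level-step at height $k$ weighted $b_k$, first-return down-step from height $k+1$ weighted $\lambda_{k+1}$) matches the conventions fixed in the paper. The paper itself does not supply a proof of this lemma; it simply quotes the result from Flajolet~\cite{Fl80} as a starting point, so there is nothing to compare against beyond noting that your proof is the standard one and is sound.
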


A Motzkin path without level-steps is called a \emph{Dyck path}, 
and a Motzkin path without level-steps at odd height is called
an \emph{Andr\'e path}.  
We denote by $\mathcal{AP}_{n, k}$ the set of Andr\'e paths of half-length $n$ with $k$ level-steps, 
and 
$\mathcal{DP}_n$ the set of Dyck paths of half length $n$.
%%%%%%%%%%%%%%%
\begin{lem}\label{Euler-Flajolet} Let $b_{i}=0$  ($i\geq 0$) 
and $\lambda_{i}=\lfloor \frac{i+1}{2}\rfloor$ ($i\geq 1$). Then 
 $$
 n!=\sum_{p\in \mathcal{MP}_n}w(p).
 $$
 In other words,  the factorial $n!$ is the generating polynomial of $\mathcal{DP}_n$.
\end{lem}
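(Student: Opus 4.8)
The plan is to deduce the lemma from Flajolet's Lemma~\ref{flajolet} together with the classical continued fraction expansion of the factorial generating function $\sum_{n\ge0}n!\,x^n$. Since all $b_i$ vanish, a path of nonzero weight has no level step, so the weight in Lemma~\ref{flajolet} is supported on Dyck paths, and reading off the coefficient of $t^{2n}$ the assertion becomes $\sum_{p\in\mathcal{DP}_n}w(p)=n!$, where $w(p)$ is the product over the down-steps of $p$ of the weights $\lambda_i=\lfloor(i+1)/2\rfloor$ attached to a down-step from height $i$ to $i-1$ (up-steps having weight $1$) -- this is exactly the second formulation, that $n!$ is the generating polynomial of $\mathcal{DP}_n$. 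Equivalently, by Lemma~\ref{flajolet} with $b_i=0$ and $\lambda_i=\lfloor(i+1)/2\rfloor$ and the substitution $x=t^2$, the claim is the S-fraction expansion
$$\sum_{n\ge0}n!\,x^n=\cfrac{1}{1-\cfrac{c_1x}{1-\cfrac{c_2x}{1-\cfrac{c_3x}{1-\cdots}}}},\qquad c_{2j-1}=c_{2j}=j\quad(j\ge1),$$
that is, $c_i=\lfloor(i+1)/2\rfloor$.

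I would prove this expansion via orthogonal polynomials. From $n!=\int_0^\infty x^ne^{-x}\,dx$ one sees that $\sum_{n\ge0}n!\,x^n$ is the Stieltjes transform of the Laguerre weight $e^{-x}\,dx$ on $[0,\infty)$, whose associated monic orthogonal polynomials $\ell_n$ satisfy $\ell_{n+1}(x)=(x-(2n+1))\ell_n(x)-n^2\ell_{n-1}(x)$; hence $\sum_{n\ge0}n!\,x^n=\cfrac{1}{1-x-\cfrac{1^2x^2}{1-3x-\cfrac{2^2x^2}{1-5x-\cdots}}}$. On the other hand the even-part contraction of the S-fraction $\cfrac{1}{1-\cfrac{c_1x}{1-\cdots}}$ is the J-fraction with level coefficients $c_1,\,c_2+c_3,\,c_4+c_5,\dots$ and quadratic coefficients $c_1c_2,\,c_3c_4,\dots$; matching with $2j+1$ and $j^2$ forces $c_1=1$, $c_{2j}+c_{2j+1}=2j+1$, $c_{2j-1}c_{2j}=j^2$, and hence $c_{2j-1}=c_{2j}=j$ by an immediate induction. (One may instead simply quote this classical expansion.) Comparing coefficients then yields $\sum_{p\in\mathcal{DP}_n}w(p)=n!$.

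The only step with genuine content here is the S-fraction identity for $\sum_{n\ge0}n!\,x^n$, and if one prefers a self-contained combinatorial proof the main obstacle is to produce it directly: one would need a bijection between $\S_n$ and the Dyck paths of half-length $n$ in which each down-step from height $i$ to $i-1$ carries a label in $\{1,\dots,\lfloor(i+1)/2\rfloor\}$. Such a bijection can be extracted from the Fran\c{c}on--Viennot (or Foata--Zeilberger) encoding of permutations by Laguerre histories -- the Motzkin-path model with level weight $2h+1$ and down weight $h^2$ at height $h$ -- by transporting it through the same even-part contraction, which merges two consecutive heights into one; carrying out this transport explicitly is the part I would expect to require the most care.
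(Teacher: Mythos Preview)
Your argument is correct and follows the same route as the paper: apply Flajolet's Lemma with $b_i=0$ and $\lambda_i=\lfloor(i+1)/2\rfloor$, which reduces the claim to Euler's classical S-fraction $\sum_{n\ge0}n!\,x^n=1/(1-\tfrac{1\cdot x}{1-\tfrac{1\cdot x}{1-\tfrac{2x}{1-\cdots}}})$. The only difference is that the paper simply quotes this identity as ``Euler's formula'' (with a pointer to a bijective proof), whereas you supply an independent derivation via the Laguerre three-term recurrence and the even-contraction of the resulting J-fraction; that extra step is sound but not needed here.
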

\begin{proof} Recall
the following formula of Euler:
\begin{align}\label{euler}
\sum_{n\geq 0}n!x^n&=\cfrac{1}{
1 - \cfrac{1 x}{
1 - \cfrac{1x}{
1-\cfrac{2x}{
1-\cfrac{2x}{
\cdots}}}}}
\end{align}
with $\lambda_{n}=\lfloor{(n+1)/2}\rfloor$.
The result then follows from Lemma~\ref{flajolet}.
\end{proof}
\begin{rem}
A bijective proof of Euler's formula \eqref{euler}  is known, see  \cite[(4.9)]{PZ19}.
\end{rem}
%%%%%%%%%%%%%%%
\begin{lem}\label{lem1} Let $b_{2i}=1$, $b_{2i+1}=0$  ($i\geq 0$) 
and $\lambda_{k}=\lfloor \frac{k+1}{2}\rfloor {t}$ ($i\geq 1$). Then 
 $$
 D_{n+1}(1,-1, t)=\sum_{p\in \mathcal{AP}_n}w(p).
 $$
 In other words,  the polynomial $D_{n+1}(1,-1, t)$ is the generating polynomial of 
 Andr\'e paths of length $n$.
\end{lem}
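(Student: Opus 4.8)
\textbf{Proof plan for Lemma~\ref{lem1}.}
The plan is to specialize the continued fraction \eqref{pq-shift} at $p=1$, $q=-1$ and then apply Flajolet's combinatorial interpretation (Lemma~\ref{flajolet}) with the weights coming from Andr\'e paths. First I would compute the $(p,q)$-integers and $(p,q)$-binomials at $(p,q)=(1,-1)$. Since $[n]_{1,-1}=\sum_{i+j=n-1}1^i(-1)^j$, one gets $[n]_{1,-1}=1$ if $n$ is odd and $[n]_{1,-1}=0$ if $n$ is even. Consequently $[n]_{1,-1}x$, the ``$b$''-type term $[n]_{p,q}x$ in \eqref{pq-shift}, equals $x$ for $n$ odd and $0$ for $n$ even; reindexing so that the $n$-th level has $b_n=[n+1]_{p,q}$ evaluated at $(1,-1)$, this says $b_{2i}=1$ and $b_{2i+1}=0$, matching the hypothesis of the lemma.

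Next I would evaluate the numerators $\binom{n}{2}_{p,q}t$ at $(p,q)=(1,-1)$. Here $\binom{n}{2}_{p,q}=[n]_{p,q}[n-1]_{p,q}/([1]_{p,q}[2]_{p,q})$, and at $(1,-1)$ we have $[1]_{1,-1}=1$ while $[2]_{1,-1}=0$, so this is a $0/0$ indeterminacy and must be resolved by a limiting argument (or by using the polynomial identity $\binom{n}{2}_{p,q}=\sum_{0\le i\le j\le n-2}p^iq^j$ directly, which avoids division). Using the explicit sum $\binom{n}{2}_{p,q}=\sum_{0\le i\le j\le n-2}p^iq^j$ and setting $p=1$, $q=-1$, the terms pair off according to the parity of $j$, and a short computation gives $\binom{n}{2}_{1,-1}=\lfloor n/2\rfloor$. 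In the continued fraction \eqref{pq-shift} the coefficient of $t\,x^2$ at the $k$-th level (the one with denominator $1-[k+1]_{p,q}x-\cdots$) is $\binom{k+2}{2}_{p,q}$; reindexing to Flajolet's convention where $\lambda_k$ is the weight of a down-step from height $k$ to height $k-1$, this becomes $\lambda_k=\binom{k+1}{2}_{1,-1}\,t=\lfloor (k+1)/2\rfloor\,t$, again exactly the hypothesis of the lemma. The condition $b_{2i+1}=0$ says there are no level-steps at odd height, so the Motzkin paths contributing are precisely the Andr\'e paths; thus $\sum_n D_{n+1}(1,-1,t)x^n$ is the generating function $\sum_n\big(\sum_{p\in\mathcal{AP}_n}w(p)\big)x^n$ by Lemma~\ref{flajolet}, and comparing coefficients of $x^n$ yields the claim.

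The main obstacle is the careful bookkeeping of the two different indexing conventions: the continued fraction \eqref{pq-shift} is written so that its $(n+1)$-st convergent produces $D_{n+1}(p,q,t)$, while Flajolet's lemma (Lemma~\ref{flajolet}) indexes $b_i,\lambda_i$ by the height of the step, and the statement of Lemma~\ref{lem1} uses yet a third convention (``length $n$'' Andr\'e paths for $D_{n+1}$). I would therefore be explicit that the $i$-th level-step weight in \eqref{pq-shift} is $[i+1]_{p,q}$ and the $i$-th down-step weight is $\binom{i+1}{2}_{p,q}t$, so that at $(1,-1)$ these become $b_i=[i+1]_{1,-1}$, which is $1$ for $i$ even and $0$ for $i$ odd, and $\lambda_i=\binom{i+1}{2}_{1,-1}t=\lfloor(i+1)/2\rfloor t$. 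A secondary (but genuinely trivial once noticed) point is resolving the $0/0$ in $\binom{n}{2}_{p,q}$ at $(1,-1)$; I would sidestep it entirely by working with the manifestly polynomial form $\binom{n}{2}_{p,q}=\sum_{0\le i\le j\le n-2}p^iq^j$ before substituting. With these conventions pinned down, the proof is just the substitution followed by an appeal to Lemma~\ref{flajolet}, exactly parallel to the proofs of Lemma~\ref{Euler-Flajolet} and the remark following it.
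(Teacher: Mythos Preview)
Your approach is exactly the paper's: specialize \eqref{pq-shift} at $(p,q)=(1,-1)$ to read off the Jacobi coefficients $b_i=[i+1]_{1,-1}$ and $\lambda_i=\binom{i+1}{2}_{1,-1}t$, then invoke Lemma~\ref{flajolet}; the vanishing of $b_{2i+1}$ restricts the Motzkin paths to Andr\'e paths.

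One correction, though: the polynomial identity you quote,
\[
\binom{n}{2}_{p,q}=\sum_{0\le i\le j\le n-2}p^iq^j,
\]
is false. For $n=3$ your sum is $1+q+pq$, while $\binom{3}{2}_{p,q}=[3]_{p,q}=p^2+pq+q^2$; more generally $\binom{n}{k}_{p,q}$ is homogeneous of degree $k(n-k)$, which your sum is not. If you actually evaluate your sum at $(1,-1)$ you get $\sum_{j=0}^{n-2}(j+1)(-1)^j$, which is $-1$ for $n=3$, not $\lfloor n/2\rfloor$. The clean way to sidestep the $0/0$ is to observe that at $p=1$ the $(p,q)$-binomial becomes the ordinary Gaussian binomial $\binom{n}{2}_q$, a genuine polynomial in $q$, and then use the standard expansion $\binom{n}{2}_q=\sum_{0\le a\le b\le n-2}q^{a+b}$ (or any of the usual $q=-1$ evaluations) to get $\binom{n}{2}_{-1}=\lfloor n/2\rfloor$. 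With that fix your argument is complete and coincides with the paper's.
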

%%%%%%%%%%%%%%
\begin{proof}
When $(p,q)=(1,-1)$ formula\eqref{pq-shift} reduces to
\begin{align}
\sum_{n=0}^\infty D_{n+1}(1,-1, t) x^n=
\cfrac{1}{1-x-\cfrac{tx^2}{1-\cfrac{tx^2}{1-x-\cfrac{2tx^2}{1-\cfrac{2tx^2}{1-x-\cdots}}}}}
\end{align}
with coefficients $b_{2i}=1$, $b_{2i+1}=0$ 
and $\lambda_{i}=\lfloor \frac{i+1}{2}\rfloor{t}$.  The result follows from Lemma~\ref{flajolet}.
\end{proof}

 Let 
$$
\mathcal{Y}_{n,k}:=\{(y_1, \ldots, y_{k+1})\in \N^{k+1}: y_1+\cdots +y_{k+1}=n-2k \}.
$$
%%%%%%%%%%%%%%%

%%%%%%%%%%%%%%%%%%%%%%%%%
\begin{ex}
An illustration   of the bijection $\psi$ is given in Figure~\ref{fig:motzkin}.
\begin{figure}[t]
\begin{tikzpicture}[scale=0.5]
\draw[line width=1pt] (-20,0)->(-19,0);
\fill (-20,0) circle (3pt);
\fill (-19,0) circle (3pt);
\draw[line width=1pt](-19,0)->(-16,3);
\fill (-18,1) circle (3pt);
\fill (-17,2) circle (3pt);
\fill (-16,3) circle (3pt);
\draw[line width=1pt] (-16,3)->(-15,2);
\fill (-15,2) circle (3pt);
\draw[line width=1pt] (-15,2)->(-13,2);
\fill (-14,2) circle (3pt);
\fill (-13,2) circle (3pt);
\draw[line width=1pt] (-13,2)->(-12,3);
\fill (-12,3) circle (3pt);
\draw[line width=1pt] (-12,3)->(-11,2);
\fill (-11,2) circle (3pt);
\draw[line width=1pt] (-11,2)->(-10,2);
\fill (-10,2) circle (3pt);
%\fill (10,0) circle (3pt);
\draw[line width=1pt] (-10,2)->(-8,0);
\fill (-9,1) circle (3pt);
\fill (-8,0) circle (3pt);
\draw[line width=1pt] (-20,0)->(-8,0);
\node[scale=2,above] at (-6,1) {$\longrightarrow$};
\node[scale=1.5,above] at (-6,1.75) {$\psi$};
%$\left(\right.$
\node[scale=0.9,left] at (0.5,0) {$\bigl((1,0,2,1,0),$};
\fill (1,0) circle (3pt);
\draw[line width=1pt](1,0)->(4,3);
\fill (2,1) circle (3pt);
\fill (3,2) circle (3pt);
\fill (4,3) circle (3pt);
\draw[line width=1pt] (4,3)->(5,2);
\fill (5,2) circle (3pt);
\draw[line width=1pt] (5,2)->(6,3);
\fill (6,3) circle (3pt);
\draw[line width=1pt] (6,3)->(9,0);
\fill (7,2) circle (3pt);
\fill (8,1) circle (3pt);
\fill (9,0) circle (3pt);
\draw[line width=1pt] (1,0)->(9,0);
\node[scale=0.9,right] at (9,0) {$\bigr)$};
%$\left.\right)$
\end{tikzpicture}
\caption{The bijection
$\psi: \mathcal{AP}_{12, 4}\to \mathcal{Y}_{12,4}\times \mathcal{DP}_{4}$}
  \label{fig:motzkin}
\end{figure}
%%%%%%%%%%%%%%%%%%%
\end{ex}

\begin{lem}\label{lem2} For $0\leq k\leq \lfloor{n/2}\rfloor$,
there is an explicit bijection $\psi: \mathcal{AP}_{n, n-2k}\to \mathcal{Y}_{n,k}\times \mathcal{DP}_{k}$ such that 
if   $\psi(u)=(y,p)$ with for $u\in \mathcal{AP}_{n, n-2k}$ and  $(y,p)\in \mathcal{Y}_{n,k}\times  \mathcal{DP}_{k}$ then
$w(u)=w(p)$, where the weight is associated to the sequences $(b_i)$ and $(\lambda_i)$
with $b_{2i}=1$, $b_{2i+1}=0$  ($i\geq 0$),  
and $\lambda_{k}=\lfloor \frac{k+1}{2}\rfloor {t}$ ($i\geq 1$).
\end{lem}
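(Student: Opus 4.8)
The goal is to produce an explicit bijection $\psi:\mathcal{AP}_{n,n-2k}\to\mathcal{Y}_{n,k}\times\mathcal{DP}_k$ that is weight-preserving for the weighting with $b_{2i}=1$, $b_{2i+1}=0$ and $\lambda_i=\lfloor(i+1)/2\rfloor t$. The first thing I would do is record the structure of an Andr\'e path $u\in\mathcal{AP}_{n,n-2k}$: it has half-length $n$ (so $2n$ non-level steps plus $n-2k$ level steps, giving total length... here I must be careful about the indexing convention the paper uses, but in any case $u$ has $k$ up-steps, $k$ down-steps and $n-2k$ level-steps, all level-steps occurring at even height). The key structural observation is that level-steps at even height $2i$ carry weight $b_{2i}=1$, so they contribute nothing to $w(u)$; only the down-steps matter, a down-step from height $i$ contributing $\lambda_i=\lfloor(i+1)/2\rfloor t$.

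**The map.** The natural idea is to \emph{delete} all the level-steps, remembering where they were, and contract the remaining $\mathsf{U}$/$\mathsf{D}$ steps into a Dyck path. Concretely: reading $u$ from left to right, the underlying sequence of up- and down-steps forms a Dyck path $p$ of half-length $k$; the level-steps of $u$, all at even height, get distributed among the $k+1$ ``plateaux at even height'' of $p$ (the maximal runs of horizontal level in the contracted picture sit at even heights $0=2j_0\le 2j_1\le\cdots$, and there are exactly $k+1$ slots — before the first up-step, between returns, after the last down-step — once one checks that every even-height position of a Dyck path of half-length $k$ is visited). Recording $y_r$ = number of level-steps in the $r$-th slot gives $(y_1,\dots,y_{k+1})\in\mathcal{Y}_{n,k}$ since $\sum y_r=n-2k$. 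I would then check surjectivity and injectivity by describing the inverse: given $(y,p)$, re-insert $y_r$ level-steps into the $r$-th even-height slot of $p$; one must verify the result is a \emph{legal} Andr\'e path, i.e.\ that the slots of a half-length-$k$ Dyck path are genuinely at even height and that every even height is reachable — this is where I'd invoke the elementary fact that in a Dyck path the height has the same parity as the number of steps taken, so returns to even heights occur after an even number of steps, exactly where level-steps are permitted.

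**Weight preservation.** Since the inserted level-steps all sit at even height they contribute factor $1$, and the contraction does not change the height of any up- or down-step relative to the sub-Dyck-path structure: a down-step that is the $j$-th down-step (matching the $j$-th up-step from the top of its arch) departs from the same height in $u$ and in $p$, because deleting even-height level-steps shifts nothing vertically. Hence $w(u)=\prod_{\text{down-steps}}\lambda_{\mathrm{ht}}=w(p)$, with the $\mathcal{Y}_{n,k}$-component being weight-free. I would state this as the one-line verification it is.

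**Main obstacle.** The only real subtlety — and the step I'd spend the most care on — is pinning down the claim that a Dyck path of half-length $k$ has exactly $k+1$ admissible ``even-height slots'' for level-step insertion, in bijection with the coordinates $y_1,\dots,y_{k+1}$, so that the insertion is unambiguous and reversible; equivalently, that the level-steps of an Andr\'e path, read off in order, are partitioned into $k+1$ (possibly empty) blocks determined canonically by the surrounding $\mathsf{U}/\mathsf{D}$ pattern. Once the bookkeeping of these slots is set up cleanly (most transparently via the word encoding on $\{\mathsf{U},\mathsf{L},\mathsf{D}\}$, deleting all $\mathsf{L}$'s and tracking insertion positions), both directions of $\psi$ and the weight identity follow immediately, and the picture in Figure~\ref{fig:motzkin} serves as the running illustration.
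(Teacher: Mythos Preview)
Your approach is the paper's: delete the $n-2k$ level-steps (each of weight $1$) to obtain the Dyck path $p$ of half-length $k$ with unchanged down-step heights, and record the deleted $\mathsf{L}$'s as the composition $y$. The paper resolves your ``main obstacle'' about the $k+1$ slots in one line: since every $\mathsf{L}$ sits at even height, the $\mathsf{U}/\mathsf{D}$-subword between two consecutive $\mathsf{L}$'s has even length, so the $2k$ non-level steps group canonically into $k$ two-step blocks $w_i\in\{\mathsf{UU},\mathsf{UD},\mathsf{DU},\mathsf{DD}\}$, yielding the unique factorisation $u=\mathsf{L}^{y_1}w_1\mathsf{L}^{y_2}\cdots w_k\mathsf{L}^{y_{k+1}}$ with exactly $k+1$ gaps---no parity bookkeeping needed.
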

\begin{proof} 
Since an Andr\'e  path (word)  on $\{\mathsf{U, D, L}\}$ has only level-steps 
at even height and starts from height 0, so the subword 
  between two consecutive level-steps $\mathsf{L}$'s must be of even length and is  a word on the alphabet  $\{\mathsf{UU}, \mathsf{DD}, \mathsf{UD}, \mathsf{DU}\}$.
Thus, 
any Andr\'e word $u\in \mathcal{AP}_{n, n-2k}$  can be  written  
in a unique way as follows:
$$
u=\mathsf{L}^{y_1}w_1\mathsf{L}^{y_2}w_2\ldots w_k\mathsf{L}^{y_{k+1}}\quad\textrm{with}\quad  w_i\in \{\mathsf{UU, DD, UD, DU}\}.
$$ 
Let  
$y:=(y_1, \ldots, y_{k+1})$ and
$p:=w_1\ldots w_k$. As the  path $p$ is obtained by removing out all  the level-steps $\mathsf{L}$'s from  the Andr\'e path $u$, each step in $p$ keeps 
the same height in $u$, and $(y,p)\in \mathcal{Y}_{n,k}\times  \mathcal{DP}_{k}$,
Let $\psi(u)=(y, p)$. It is clear that this is the desired bijection.
 %because each down-step in $p$ has the same height in $u$.
\end{proof}

\begin{thm}\label{formula(-1)} For $n\geq 1$ we have 
%We have $E_0(1,-1)=1$ and
\begin{align}\label{t-Han}
D_{n}(1,-1,t)=\sum_{k=0}^{n-1}\binom{n-1-k}{ k}k!t^k.
\end{align}
\end{thm}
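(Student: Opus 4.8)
The plan is to read off the identity from the three path-enumeration lemmas already established. By Lemma~\ref{lem1} (with $n$ replaced by $n-1$), for every $n\geq 1$ the polynomial $D_n(1,-1,t)$ is the generating polynomial $\sum_{u\in\mathcal{AP}_{n-1}}w(u)$ of Andr\'e paths of length $n-1$, where the weight is attached to the sequences $b_{2i}=1$, $b_{2i+1}=0$ and $\lambda_i=\lfloor(i+1)/2\rfloor t$. The strategy is then to stratify $\mathcal{AP}_{n-1}$ by the number of level-steps and evaluate each stratum with Lemma~\ref{lem2}.

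First I would record the parity remark that an Andr\'e path of length $n-1$ has equally many up- and down-steps, so its number $m$ of level-steps satisfies $m\equiv n-1\pmod 2$; writing $m=(n-1)-2k$ with $0\leq k\leq\lfloor(n-1)/2\rfloor$ gives the disjoint decomposition $\mathcal{AP}_{n-1}=\bigsqcup_k\mathcal{AP}_{n-1,(n-1)-2k}$. Applying the weight-preserving bijection $\psi$ of Lemma~\ref{lem2}, $\mathcal{AP}_{n-1,(n-1)-2k}\xrightarrow{\sim}\mathcal{Y}_{n-1,k}\times\mathcal{DP}_k$, and noting that the $\mathcal{Y}$-component only records the level-steps, which sit at even height and hence carry weight $b_{2i}=1$, one obtains
\[
\sum_{u\in\mathcal{AP}_{n-1,(n-1)-2k}}w(u)=\lvert\mathcal{Y}_{n-1,k}\rvert\cdot\sum_{p\in\mathcal{DP}_k}w(p).
\]

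It remains to evaluate the two factors. The set $\mathcal{Y}_{n-1,k}$ consists of the $(y_1,\dots,y_{k+1})\in\N^{k+1}$ with $y_1+\cdots+y_{k+1}=(n-1)-2k$, so by stars and bars $\lvert\mathcal{Y}_{n-1,k}\rvert=\binom{(n-1-2k)+k}{k}=\binom{n-1-k}{k}$. For the Dyck-path factor, every $p\in\mathcal{DP}_k$ has exactly $k$ down-steps, each contributing one factor $t$ through $\lambda_h=\lfloor(h+1)/2\rfloor t$; pulling these $k$ copies of $t$ out and invoking Lemma~\ref{Euler-Flajolet} (Euler's continued fraction) gives $\sum_{p\in\mathcal{DP}_k}w(p)=t^k\cdot k!$. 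Summing over $k$,
\[
D_n(1,-1,t)=\sum_{k=0}^{\lfloor(n-1)/2\rfloor}\binom{n-1-k}{k}k!\,t^k,
\]
and since $\binom{n-1-k}{k}=0$ whenever $k>(n-1)/2$, the range may be extended to $0\leq k\leq n-1$, yielding \eqref{t-Han}.

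I do not expect a genuine obstacle: the proof is an assembly of results proved earlier, and the only points demanding care are the reduction from $D_n$ to $\mathcal{AP}_{n-1}$ together with the parity constraint $m\equiv n-1\pmod 2$, the correct reading of the weight so that $\psi$ is truly weight-preserving and $\mathcal{Y}_{n-1,k}$ contributes a bare count, and the harmless extension of the summation range at the end. As a sanity check one could alternatively verify that the right-hand side of \eqref{t-Han} satisfies the three-term recurrence encoded by \eqref{pq-shift} at $(p,q)=(1,-1)$, but the path-counting route is the cleaner one.
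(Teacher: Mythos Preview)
Your proposal is correct and follows essentially the same route as the paper: invoke Lemma~\ref{lem1} to pass to Andr\'e paths, apply the bijection $\psi$ of Lemma~\ref{lem2} to split each stratum as $\mathcal{Y}_{n-1,k}\times\mathcal{DP}_k$, and then use Lemma~\ref{Euler-Flajolet} to evaluate the Dyck-path factor as $k!\,t^k$. The only differences are cosmetic: you spell out the parity constraint, the stars-and-bars count of $\lvert\mathcal{Y}_{n-1,k}\rvert$, and the extension of the summation range, all of which the paper leaves implicit.
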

\begin{rem}  
If $t=1$, in view of \eqref{q-ES-D}, the above  result is equivalent  to
\begin{align}\label{Han-eq}
E_n(-1)=\sum_{k=0}^{n-1}\binom{n-1-k}{ k}k!,
\end{align}
which was posted  by P. Barry, see A122852 in OEIS~\cite{OEIS}.
Han~\cite[Theorem~7.1]{Han19} gave a non-trivial (sic) proof of  \eqref{Han-eq} by showing that 
both sides of \eqref{Han-eq}  satisfy the same recurrence relation, which had been conjectured by 
R. J. Mathar.  Our proof of \eqref{t-Han} is combinatorial and  insightful for the summation formula in \eqref{t-Han}.
\end{rem}

\begin{proof}[Proof of Theorem~\ref{formula(-1)}]
By Lemmas~\ref{lem1} and \ref{lem2} we have 
$$
D_{n+1}(1,-1, t)=\sum_{k\geq 0}\sum_{(y,p)\in \mathcal{Y}_{n,k}\times \mathcal{DP}_{k}}w(p).
$$
Since the cardinality of $\mathcal{Y}_{n,k}$  is $\binom{n-k}{k}$, and 
the generating polynomial of $\mathcal{DP}_k$ is equal to $k! t^k$ by Lemma~\ref{Euler-Flajolet}, summing over all
 $0\leq k\leq \lfloor{n/2}\rfloor$ we obtain Theorem~\ref{t-Han}.
\end{proof}

\begin{rem}
It is a challenge to   show directly that   Theorem~\ref{formula(-1)} is the limit case 
of Theorem~\ref{formula(p=1)}  when $q\to -1$.
\end{rem}
%%%%%%%%%%%%%%%%%%%%%%%

%%%%%%%%%%%%%%%%%%%%%5

% ----------------------------------------------------------------
\end{document}